\newtheorem{theorem}{Theorem}
\newtheorem{lemma}{Lemma}
\newtheorem{corollary}{Corollary}
\begin{document}

\title[Divergent spherical harmonic expansions]{Almost everywhere divergence of spherical harmonic expansions and equivalence of summation methods}
\author{Xianghong Chen \and Dashan Fan \and Juan Zhang}
\address{Xianghong Chen\\Department of Mathematical Sciences\\University of Wisconsin-Milwaukee\\Milwaukee, WI 53211, USA}
\email{chen242@uwm.edu}
\address{Dashan Fan\\Department of Mathematical Sciences\\University of Wisconsin-Milwaukee\\Milwaukee, WI 53211, USA}
\email{fan@uwm.edu}
\address{Juan Zhang\\School of Mathematical Sciences\\Beijing Normal University\\Laboratory of Mathematics and Complex Systems\\Beijing 100875,  People's Republic of China}
\email{zhangjuanw@mail.bnu.edu.cn}
\date{\today}

\subjclass[2010]{43A85, 43A50, 40G05, 33C55} 
\keywords{Ces\`aro means, Bochner-Riesz means, spherical harmonic expansion, almost everywhere divergence} 

\maketitle

\begin{abstract}
We show that there exists an integrable function on the $n$-sphere $(n\ge 2)$, whose Ces\`aro (C,$\frac{n-1}{2}$) means with respect to the spherical harmonic expansion diverge unboundedly almost everywhere. By studying equivalence theorems, we also obtain the corresponding results for Riesz and Bochner-Riesz means. 
This extends results of Stein (1961) for flat tori and complements the work of Taibleson (1985) for spheres. 
\end{abstract}

\tableofcontents

\section{Introduction}\label{sec:introduction}

Let $\mathbb S^n$ ($n\ge 2$) be the $n$-dimensional sphere equipped with the uniform measure. We consider for $f\in L^1(\mathbb S^n)$ the
 spherical harmonic expansion
\begin{equation}\label{intro:expansion}
f\sim \sum_{k=0}^\infty \text{proj}_k f,
\end{equation}
where $\text{proj}_k$ denotes the orthogonal projection operator from $L^2(\mathbb S^n)$ to the space of spherical harmonics of degree $k$ (cf. \cite[Chapter~IV, Section~2]{SteinWeiss1971}). It is well known that the series  in \eqref{intro:expansion} diverges for general $f\in L^1(\mathbb S^n)$. So it is natural to consider summation methods that guarantee the convergence of \eqref{intro:expansion}. In this regard, we consider the Ces\`aro (C,$\delta$) means, $S^\delta_N f,\ N=0, 1, \cdots$, for the series in \eqref{intro:expansion}. Bonami and Clerc \cite{BonamiClerc1973} showed that $S_N^\delta f$ converges almost everywhere to $f$ provided $\delta>\delta_0:=\frac{n-1}{2}$. For $\delta<\delta_0$, Meaney \cite{Meaney2003} showed that there exists a zonal function $f\in L^1(\mathbb S^n)$ 
such that $S_N^\delta f$ diverges almost everywhere. At the critical index $\delta=\delta_0$, a general result of Christ and Sogge \cite{ChristSogge1988} implies 
that $S_N^{\delta_0} f$ always converges in measure to $f$. Chanillo and Muckenhoupt \cite{ChanilloMuckenhoupt1993} showed that, if moreover $f$ is zonal, then $S_N^{\delta_0} f$ converges almost everywhere to $f$.

In view of Kolmogoroff's counterexample \cite{Kolmogoroff1923} on $\mathbb S^1$ 
and Stein's counterexample \cite{Stein1961} on the tori $\mathbb T^n$ ($n\ge 2$), 
it can be expected that there exists an $f\in L^1(\mathbb S^n)$ such that $S_N^{\delta_0} f$ diverges almost everywhere on $\mathbb S^n$. Indeed, such a result is claimed in a paper by Taibleson \cite{Taibleson1985} with $f$ belonging to the Hardy space $H^1(\mathbb S^n)$ (note that $H^1\subset L^1$; see also remarks in \cite{ColzaniTaiblesonWeiss1984}, \cite{Zaloznik1988}). However, Taibleson addresses only how to obtain such a result from the corresponding result with $f\in L^1(\mathbb S^n)$, by adapting an idea of Stein in \cite{Stein1983} for $\mathbb T^n$. The main purpose of this paper is to complete the proof of Taibleson's claim by constructing an $f\in L^1(\mathbb S^n)$ such that $S_N^{\delta_0} f$ diverges almost everywhere (see Section \ref{sec:divergence}, Theorem \ref{thm:cesaro-divergence}). 
Although our proof bears some similarity to those for $\mathbb S^1$ and $\mathbb T^n$, some aspects are new and may provide motivation for the study of more general settings, and of more refined questions. 

Another purpose of this paper is to study the relations among Ces\`aro, Riesz, and Bochner-Riesz means. These summation methods are known to produce the same order of summability for any numerical series (cf. \cite{HardyRiesz1964}). 
However, the situation is more delicate at the critical order if one considers divergence properties.\footnote{By the word ``divergence'' we will always mean unbounded divergence.} In Section \ref{sec:summation}, we use a result of Ingham \cite{Ingham1968/1969} to show that the three summation methods are equidivergent in an almost everywhere sense  
when $\delta=\delta_0$ and $f\in L^1(\mathbb S^n)$. In particular, we obtain from the almost everywhere divergence result for Ces\`aro means almost everywhere divergence results for Riesz and Bochner-Riesz means. 

The construction of $f$ relies on precise estimates of the summation kernel. In the case of tori \cite{Stein1961} and compact semisimple Lie groups  
\cite{ChenFantoappear}, such estimates were obtained using the Poisson summation formula. However, except for some special cases, 
such an approach does not seem to carry over to $\mathbb S^n$. Based on detailed study of the Jacobi polynomials (cf. \cite{Szegoe1975}), Bonami and Clerc \cite{BonamiClerc1973} were able to obtain rather precise estimates of the Ces\`aro kernels on $\mathbb S^n$. They applied the estimates mainly with $\delta>\delta_0$ in \cite{BonamiClerc1973}. For the sake of self-containedness, in Section \ref{sec:notation} we give a detailed presentation for the case $\delta=\delta_0$ following their approach. 

With the kernel estimates, to obtain the almost everywhere divergence result, 
we combine ideas of Stein \cite{Stein1961} in his treatments of $\mathbb S^1$ and resp. $\mathbb T^n$, as well as the treatment of compact semisimple Lie groups by the first two authors in \cite{ChenFantoappear}. More precisely, as in \cite{ChenFantoappear} we first use Young's inequality and the weak (1,1) boundedness of the Hardy-Littlewood maximal function 
to remove the influence of the global part of the kernel. 
We then need to find an appropriate $f\in L^1(\mathbb S^n)$ to blow up the local part of the kernel. For this we use an idea of Stein in his treatment of $\mathbb S^1$, that is to replace $f$ by an appropriate probability measure $\mu$ whose mass is equally distributed on finitely many points. The points in the support of $\mu$ need to be suitably equidistributed, and the distance functions generated by them need to be rationally independent almost everywhere. In the case of compact semisimple Lie groups, such points were constructed in \cite{ChenFantoappear} using a probabilistic approach similar to that of Kahane \cite{Kahane1960/1961} for $\mathbb S^1$. 
However, the probabilistic approach is somewhat limited and provides limited information about details of the divergence. Therefore we opt for a deterministic approach for $\mathbb S^n$. As the proofs will show, this approach provides much more flexibility in choosing the points. In particular, it will be shown that the equidistribution property is satisfied for any sufficiently dense packing of $\mathbb S^n$ (or modifications thereof; see Lemma \ref{lem:Riemann-sum}). 
It will also be shown that the rational independence property holds whenever the points are distinct and contain no antipodal pairs (Lemma \ref{lem:point-masses}). 
It is worth mentioning that the latter requires knowledge about the analyticity of the distance functions on $\mathbb S^n$ (see \cite{Bochner1936}, \cite{Stein1961} for the case of $\mathbb R^n$); in particular, it is used in the proof that the cut loci of $\mathbb S^n$ are singletons.
Interestingly, similar considerations are not needed in the case of $\mathbb T^n$. This is because the global part of the kernel on $\mathbb T^n$ already diverges almost everywhere, so that one can simply take $\mu$ to be a point mass, see \cite{Stein1961}. 

Throughout the paper, unless otherwise stated, $C$ denotes a positive, dimensional constant whose value may change from line to line. $A=O(B)$ means that $|A|\le \widetilde C B$ holds for a constant $\widetilde C>0$ independent of the testing inputs (which will usually be clear from the context). 

\section{Notation and preliminaries}\label{sec:notation}

In what follows, we denote by $\mathbb S^{n}$ the unit sphere in $\mathbb R^{n+1}$ ($n\ge2$) equipped with the standard round metric. Denote by $\Delta$ the Laplace-Beltrami operator on $\mathbb S^{n}$. For $k=0,1,\cdots$, denote by
$\mathcal H_k^n$ the space of spherical harmonics of degree $k$ (for background on spherical harmonics, cf. \cite[Chapter IV]{SteinWeiss1971}, \cite{WangLi2000}, \cite{DaiXu2013}). It is well known that we have the  orthogonal decomposition 
$$L^2(\mathbb S^n)=\bigoplus_{k=0}^\infty \mathcal H_k^n;$$
moreover, 
\begin{equation}\label{eq:eigenvalue}
\Delta Y_k=-k(k+n-1)\,Y_k,\ \forall Y_k\in \mathcal H_k^n,
\end{equation}
$$\dim\mathcal H_k^n=\binom{k+n}{k}-\binom{k-2+n}{k-2}.$$
Denote by
$$\text{proj}_k: L^2(\mathbb S^n)\rightarrow \mathcal H_k^n$$
the orthogonal projection from $L^2(\mathbb S^n)$ to $\mathcal H_k^n$. Set
$$\lambda=\frac{n-1}{2}.$$
Let $C_k^\lambda(t)$ be the Gegenbauer polynomial of degree $k$ and index  $\lambda$. Equivalently,
\begin{equation}\label{eq:gegenbauer-jacobi}
C_k^\lambda(t)=\frac{\Gamma(\lambda+\frac{1}{2})}{\Gamma(2\lambda)}
\frac{\Gamma(k+2\lambda)}{\Gamma(k+\lambda+\frac{1}{2})} 
P_k^{(\lambda-\frac{1}{2},\lambda-\frac{1}{2})}(t),
\end{equation}
where $P_k^{(\alpha,\beta)}$ is the Jacobi polynomial of degree $k$ (cf. Szeg\"o \cite[p.~80]{Szegoe1975}). 
Denote by
$$|x-y|\in [0,\pi]$$
the great-circle distance between $x$ and $y$ on $\mathbb S^n$. It is a standard fact that
\begin{equation}\label{eq:proj-zonal}
\text{proj}_k f=\int_{\mathbb S^n} Z_k(\cdot,y) f(y)dy,
\end{equation}
where
\begin{equation}\label{eq:zonal-def}
Z_k(x,y)=\frac{k+\lambda}{\lambda}C_k^\lambda \big(\cos|x-y|\big).
\end{equation}
Note that 
$$\cos|x-y|=\langle x,y \rangle$$
represents the inner product in $\mathbb R^{n+1}$. Note also that, it follows from \eqref{eq:gegenbauer-jacobi} and Szeg\"o \cite[Theorem~7.31.2]{Szegoe1975}
\begin{equation}\label{eq:Z-bound} 
|Z_k(x,y)| 
\le C (k+1)^\frac{n-1}{2} 
|x-y|^{-\frac{n-1}{2}} |x-\hat y|^{-\frac{n-1}{2}},\ k=0,1,\cdots,
\end{equation}
where $\hat y$ denotes the antipodal point of $y$ on $\mathbb S^n$. (See also Bonami and Clerc \cite[p.~231]{BonamiClerc1973}.)

For $f\in L^1(\mathbb S^n)$, we can consider the formal spherical harmonic expansion
$$f\sim \sum_{k=0}^\infty \text{proj}_k f,$$
with $\text{proj}_k f\in\mathcal H^n_k$ given by \eqref{eq:proj-zonal}. 
Let $\delta>-1$. The corresponding Ces\`{a}ro (C,$\delta$) means are defined by 
\begin{equation}\label{eq:cesaro-mean-def}
S^\delta_N f = \frac{1}{A_N^\delta}\sum_{k=0}^{N} A_{N-k}^\delta \text{proj}_k f,\ N=0, 1, \cdots
\end{equation}
where
\begin{equation}\label{eq:A-delta-k}
A_k^\delta=\binom{k+\delta}{k}=\frac{(\delta+k)(\delta+k-1)\cdots(\delta+1)}{k(k-1)\cdots 1}=\frac{\Gamma(k+\delta+1)}{\Gamma(k+1)\Gamma(\delta+1)}.
\end{equation}
Note that the sequence $\{A_k^\delta\}_{k=0}^\infty$ satisfies
\begin{equation}\label{eq:taylor}
(1-x)^{-1-\delta}=\sum_{k=0}^\infty A_k^\delta x^k,\quad -1<x<1.
\end{equation}
By \eqref{eq:proj-zonal}, we can also write
\begin{equation}\label{eq:cesaro-sum-with-kernel}
S^\delta_N f=\int_{\mathbb S^n}K^\delta_N(\cdot,y)f(y)dy
\end{equation}
where
\begin{equation}\label{eq:cesaro-kernel-def}
K^\delta_N(x,y)=\frac{1}{A_N^\delta}\sum_{k=0}^{N} A_{N-k}^\delta Z_k(x,y)
\end{equation}
is called the ($N$-th) Ces\`{a}ro kernel of order $\delta$. Note that $K^\delta_N(x,y)$ depends only on $|x-y|$. In general, for kernels $K(x,y)$ satisfying this property, we will write
\begin{equation}\label{eq:kernel-convolution}
K*f:=\int_{\mathbb S^n} K(\cdot,y)f(y)dy.
\end{equation}
In particular, we will write
$$S^\delta_N f=K^\delta_N*f.$$

In what follows, we will focus on the case
$$\delta=\delta_0=\frac{n-1}{2}.$$
For simplicity, we will write
\begin{equation}\label{eq:kernel-no-delta}
K_N(x,y)=K^{\delta_0}_N(x,y).
\end{equation}
Following Bonami and Clerc \cite{BonamiClerc1973}, we first decompose $K_N(x,y)$ as
$$K_N(x,y)=K_N^{(0)}(x,y)+K_N^{(\pi)}(x,y)$$
where
\begin{align}
K_N^{(0)}(x,y)& =K_N(x,y)\,\mathds 1_{\{|x-y|\le \pi/2\}},\label{eq:K-(0)}\\
K_N^{(\pi)}(x,y)& =K_N(x,y)\,\mathds 1_{\{|x-y|>\pi/2\}}\label{eq:K-(pi)}.
\end{align}
We will need the following estimate for $K_N^{(\pi)}(x,y)$. See \cite[Corollary~2.5]{BonamiClerc1973}.

\begin{lemma}\label{lem:antipodal}
There exists a constant $C>0$, such that
$$|K_N^{(\pi)}(x,y)|\le C|x-\hat y|^{-\frac{n-1}{2}},\ N=0,1,\cdots.$$
\end{lemma}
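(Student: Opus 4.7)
My plan is to exploit the antipodal symmetry $C_k^\lambda(-t)=(-1)^kC_k^\lambda(t)$ to rewrite $K_N^{(\pi)}$ as an alternating Ces\`aro sum, then reduce the estimate to extracting the $N$-th Taylor coefficient of an explicit algebraic generating function whose singularities on $|w|=1$ can be analyzed by the Darboux method.

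Setting $\theta:=|x-\hat y|\in[0,\pi/2)$, so that $\cos|x-y|=-\cos\theta$, \eqref{eq:zonal-def} and the antipodal symmetry give
$$K_N(x,y)=\frac{1}{\lambda A_N^\lambda}\sum_{k=0}^N(-1)^kA_{N-k}^\lambda(k+\lambda)C_k^\lambda(\cos\theta).$$
Differentiating the Gegenbauer generating function yields $\sum_k(k+\lambda)C_k^\lambda(t)w^k=\lambda(1-w^2)(1-2tw+w^2)^{-\lambda-1}$; substituting $t=-\cos\theta$ and multiplying by $(1-w)^{-\lambda-1}=\sum_k A_k^\lambda w^k$ (see \eqref{eq:taylor}) yields
$$\Phi(w):=\sum_{N\ge 0}\lambda A_N^\lambda K_N(x,y)\,w^N=\lambda\,\frac{1+w}{(1-w)^\lambda}\bigl(1+2w\cos\theta+w^2\bigr)^{-\lambda-1}.$$

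The singularities of $\Phi$ on $|w|=1$ sit at $w=1$ (with exponent $\lambda$) and at $w=-e^{\pm i\theta}$ (each with exponent $\lambda+1$). I would apply the Darboux/Hankel-contour method to each singularity. The residual smooth factor at $w=1$ equals $2\lambda(2+2\cos\theta)^{-\lambda-1}$, uniformly bounded for $\theta\in[0,\pi/2)$, so this singularity contributes only $O(N^{\lambda-1})$ to $[w^N]\Phi$. At $w=-e^{\pm i\theta}$ the residual smooth factor involves $(1-e^{\pm i\theta})$ and $(\mp 2i\sin\theta)^{-\lambda-1}$; the half-angle identities give its modulus as $O((\sin(\theta/2))^{-\lambda})$, while the singular factor contributes $\sim A_N^\lambda\sim N^\lambda$. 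Summing the conjugate pair and normalizing by $\lambda A_N^\lambda$ then yields $|K_N(x,y)|\le C\theta^{-\lambda}=C|x-\hat y|^{-(n-1)/2}$, uniformly in $N$.

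The main technical obstacle is making the Darboux expansion uniform in $\theta$ as $\theta\to 0$, since the two singularities $-e^{\pm i\theta}$ then coalesce at $w=-1$ into one of order $2(\lambda+1)$. Happily, this is precisely the regime where the right-hand side $|x-\hat y|^{-(n-1)/2}$ is allowed to blow up, so one can take a Hankel contour whose radius shrinks with $\theta$ and still recover the same $\theta^{-\lambda}$ prefactor from the merged-singularity expansion. A structurally cleaner alternative, essentially the route of Bonami and Clerc, is to rewrite $\Phi(w)$ via \eqref{eq:gegenbauer-jacobi} as the generating function of a single Jacobi polynomial $P_N^{(\alpha,\beta)}(\cos|x-y|)$ with appropriately shifted parameters times a smooth weight, and then apply Szeg\"{o}'s asymptotic \cite[Theorem 7.32.5]{Szegoe1975} to obtain the bound directly, bypassing all contour work.
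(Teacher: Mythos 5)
Your route is genuinely different from the paper's, because the paper does not prove this lemma at all: it is quoted directly from Bonami--Clerc (their Corollary 2.5), whose argument runs through the Jacobi-polynomial representation of the Ces\`aro kernel (as in Lemma \ref{lem:jacobi}) together with Szeg\"o's uniform endpoint estimates. Your self-contained generating-function computation checks out: with $\theta=|x-\hat y|\in(0,\pi/2)$ one indeed has $\sum_{N\ge0}\lambda A_N^{\lambda}K_N(x,y)w^N=\lambda(1+w)(1-w)^{-\lambda}\bigl((w+e^{i\theta})(w+e^{-i\theta})\bigr)^{-\lambda-1}$, the exponents at $w=1$ and $w=-e^{\pm i\theta}$ are as you state, and the residual factor at $-e^{\pm i\theta}$ has modulus comparable to $(\sin(\theta/2))^{-\lambda}(\cos(\theta/2))^{-2\lambda-1}=O(\theta^{-\lambda})$ since $\cos(\theta/2)\ge 2^{-1/2}$ on this range. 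The only substantive issue is the one you flag yourself: Darboux gives asymptotics with errors that are not automatically uniform in $\theta$, and the singularities coalesce when $\theta\le 1/N$. That step is where all the work lies and it is left as a sketch; it can, however, be closed, and in fact more cheaply than via Darboux, because only an upper bound is claimed: apply Cauchy's formula on the circle $|w|=1-1/N$ and bound $\int|\Phi|$ using $|1-w|$, $|w+e^{\pm i\theta}|$ comparable to $1/N$ plus the respective angular distances; the two singularities near $-1$ then contribute at most $CN^{\lambda}(N^{-1}+\theta)^{-\lambda}$ and the $w=1$ singularity only $C(N^{\lambda-1}+\log N+1)$, so after dividing by $\lambda A_N^{\lambda}$ one gets $|K_N(x,y)|\le C(N^{-1}+\theta)^{-\lambda}\le C\theta^{-\lambda}$ uniformly in $N$, which is even slightly stronger than the stated lemma (and matches the sharper Bonami--Clerc bound). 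One caveat on your ``cleaner alternative'': it is essentially the cited Bonami--Clerc route, but as phrased it oversimplifies, since by Lemma \ref{lem:jacobi} the kernel is a single Jacobi polynomial only up to the error series $E_N$ of higher-order Ces\`aro kernels, which requires its own antipodal estimates rather than being a smooth weight.
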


To estimate $K_N^{(0)}(x,y)$, following Bonami and Clerc \cite{BonamiClerc1973}, we will use the following formula (see Szeg\"{o} \cite[p.~261]{Szegoe1975}).

\begin{lemma}\label{lem:jacobi}
For $N=1,2,\cdots$, we have
$$K_N(x,y)=C_N P_N^{(n-\frac{1}{2},\frac{n-2}{2})}(\cos|x-y|)+E_N(x,y)$$
where
\begin{align*}
& C_N=\frac{1}{A^{\delta_0}_N}
\left\{ 2^{n-1} \Gamma\big(\frac{n}{2}\big)
\frac{\Gamma\big(N+\frac{n}{2}\big)}
{\Gamma\big(N+\frac{3n}{2}-1\big)}
\frac{\Gamma\big(2N+2n-1\big)}
{\Gamma\big(2N+\frac{3n}{2}\big)}\right\}^{-1},\\
& E_N(x,y)=-\sum_{\ell=1}^{\infty} (-1)^\ell\, 
\binom{\delta_0}{\ell}
\frac{(N+\frac{n+1}{2})\cdots(N+\frac{n+1}{2}+\ell-1)}{(2N+\frac{3n}{2})\cdots(2N+\frac{3n}{2}+\ell-1)}
K^{\delta_0+\ell}_N(x,y).
\end{align*}
\end{lemma}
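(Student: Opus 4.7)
The identity is essentially a restatement of a classical Cesàro summability formula for Jacobi series (Szegő, loc.\ cit.), so the plan is to translate from the zonal-harmonic framework into the Jacobi framework, invoke the stated identity, and then carry out the normalization bookkeeping.

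First I would use \eqref{eq:zonal-def} and \eqref{eq:gegenbauer-jacobi} to rewrite each $Z_k(x,y)$ as an explicit $\Gamma$-function multiple of $P_k^{(\alpha,\alpha)}(\cos|x-y|)$, where $\alpha = \lambda - \tfrac12 = (n-2)/2$. Substituting into \eqref{eq:cesaro-kernel-def} presents $K_N(x,y)$ as a Cesàro sum of order $\delta_0$ of the symmetric Jacobi polynomials $P_k^{(\alpha,\alpha)}(\cos|x-y|)$, weighted by the factors $(k+\lambda)/\lambda$ and the ratios $\Gamma(k+2\lambda)/\Gamma(k+\lambda+\tfrac12)$ coming from the conversion.

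Next, I would apply the Szegő identity on p.~261. For Jacobi parameters $(\alpha,\beta)$ and order $\delta$ the identity expresses the natural Cesàro sum as a single Jacobi polynomial $P_N^{(\alpha+\delta+1,\beta)}$ with an explicit prefactor. Taking $\alpha=\beta=(n-2)/2$ and $\delta=\delta_0$, one checks $\alpha+\delta_0+1 = n-\tfrac12$, which matches the parameters in the statement. The constant $C_N$ is then obtained by comparing the prefactor in Szegő's identity with the conversion constant from \eqref{eq:gegenbauer-jacobi}; the factors $2^{n-1}$, $\Gamma(n/2)$, and the $\Gamma$-ratios $\Gamma(N+n/2)/\Gamma(N+\tfrac{3n}{2}-1)$ and $\Gamma(2N+2n-1)/\Gamma(2N+\tfrac{3n}{2})$ come, respectively, from the normalization $P_N^{(\alpha+\delta_0+1,\alpha)}(1)$, the leading coefficient of $P_N^{(\alpha+\delta_0+1,\alpha)}$, and the comparison of $A_N^{\delta_0}$ with $\Gamma(N+\alpha+\delta_0+2)/\Gamma(N+1)$.

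The error term $E_N$ arises from the mismatch between the weights $(k+\lambda)/\lambda \cdot \Gamma(k+2\lambda)/\Gamma(k+\lambda+\tfrac12)$ produced in Step 1 and the natural weights occurring in Szegő's identity. The ratio of these two sequences, after cancellation, has the form $(1+R_{N,k})^{\delta_0}$ with $R_{N,k}$ a rational function of $N$ and $k$ with poles at $2N+\tfrac{3n}{2}+j$; expanding by the Newton binomial $(1+x)^{\delta_0} = \sum_{\ell \ge 0}\binom{\delta_0}{\ell}x^\ell$ (compare \eqref{eq:taylor}) produces the coefficients $(-1)^\ell \binom{\delta_0}{\ell}$ with the Pochhammer ratios $(N+\tfrac{n+1}{2})_\ell/(2N+\tfrac{3n}{2})_\ell$. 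Each resulting series $\sum_{k=0}^N A_{N-k}^{\delta_0}(\cdot)_\ell \, Z_k$ is recognized, after a standard summation-by-parts iterated $\ell$ times (which turns an extra $A_{N-k}^{\delta_0}$ into $A_{N-k}^{\delta_0+\ell}$), as $A_N^{\delta_0+\ell} K_N^{\delta_0+\ell}(x,y)$, which after dividing by $A_N^{\delta_0}$ and absorbing the gamma ratios leaves exactly $K_N^{\delta_0+\ell}(x,y)$ weighted as stated.

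The main obstacle is the bookkeeping: the underlying identity is classical, but the precise normalization in the statement demands careful tracking of several simultaneous $\Gamma$-function ratios and a verification that the binomial-type expansion terminates at the correct Pochhammer factors, with no other residual terms. A secondary subtlety is convergence of the series defining $E_N$, which is justified because $\binom{\delta_0}{\ell}$ decays like $\ell^{-\delta_0-1}$ while the Pochhammer ratio is bounded by $2^{-\ell}$ for $\ell \le N$, so term-by-term manipulations are legitimate.
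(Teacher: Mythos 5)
The paper itself gives no argument for this lemma: it is quoted verbatim from Szeg\"o \cite[p.~261]{Szegoe1975}, following Bonami--Clerc, so your overall plan (rewrite $Z_k$ via \eqref{eq:gegenbauer-jacobi} as Jacobi polynomials $P_k^{(\frac{n-2}{2},\frac{n-2}{2})}$ and invoke the classical formula, then track the $\Gamma$-factor normalizations) is consistent with the paper's treatment. However, your description of the classical input, and of where $E_N$ comes from, is not correct as stated. If Szeg\"o's identity really expressed the natural Ces\`aro sum of order $\delta$ exactly as a single Jacobi polynomial $P_N^{(\alpha+\delta+1,\beta)}$ with an explicit prefactor, then the lemma would hold with $E_N\equiv 0$, which it does not. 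The exact classical identity is a connection formula: $P_N^{(n-\frac12,\frac{n-2}{2})}$ expands over $k\le N$ with coefficients equal to the Ces\`aro weight $A_{N-k}^{\delta_0}$ \emph{times an additional Gamma-ratio depending on $N+k$}, and the infinite series $E_N$ arises from expanding that extra $(N+k)$-dependent factor. Your proposed mechanism --- a mismatch between two fixed $k$-dependent weight sequences (the zonal normalization versus ``Szeg\"o's weights''), written as $(1+R_{N,k})^{\delta_0}$ and expanded by the binomial series --- cannot produce the $N$-dependent Pochhammer coefficients $\frac{(N+\frac{n+1}{2})_\ell}{(2N+\frac{3n}{2})_\ell}$, since a ratio of two sequences in $k$ carries no $N$-dependence at all; the $N$-dependence must enter through the connection coefficients themselves.

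Two further points would fail if executed as written. First, the passage from $A_{N-k}^{\delta_0}$ to $A_{N-k}^{\delta_0+\ell}$ is not achieved by ``summation by parts iterated $\ell$ times'' (that would replace $Z_k$ by its partial sums and change the summand); it is the purely algebraic identity
\begin{equation*}
A_m^{\delta_0}\,(m+\delta_0+1)(m+\delta_0+2)\cdots(m+\delta_0+\ell)
=\frac{\Gamma(\delta_0+\ell+1)}{\Gamma(\delta_0+1)}\,A_m^{\delta_0+\ell},
\end{equation*}
applied termwise after the $(N+k)$-dependent factor has been expanded so that its $\ell$-th term carries exactly such a rising factorial in $m=N-k$. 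Second, your convergence remark is off: the Pochhammer ratio $\frac{(N+\frac{n+1}{2})_\ell}{(2N+\frac{3n}{2})_\ell}$ is \emph{not} bounded by $2^{-\ell}$ once $\ell$ is large compared with $N$ (the individual factors tend to $1$); it is only bounded by $1$, which, combined with $\big|\binom{\delta_0}{\ell}\big|\le C\ell^{-1-\delta_0}$, is exactly the absolute-convergence argument recorded in \eqref{eq:abs-conv}. In short, citing Szeg\"o's formula is legitimate (it is all the paper does), but the bookkeeping you sketch to recover the stated $C_N$ and $E_N$ would need to be reorganized around the connection-coefficient structure before it constitutes a proof.
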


Regarding the coefficients in Lemma \ref{lem:jacobi}, we have
\begin{equation}\label{eq:C_N-asympt}
\lim_{N\rightarrow\infty} \frac{C_N}{N^{1/2}}=2^{-\frac{3n-4}{2}}\frac{\Gamma\Big(\frac{n+1}{2}\Big)}{\Gamma\Big(\frac{n}{2}\Big)}, 
\end{equation}
and 
\begin{equation}\label{eq:abs-conv}
\sum_{\ell=1}^{\infty} \left |\binom{\delta_0}{\ell}
\frac{(N+\frac{n+1}{2})\cdots(N+\frac{n+1}{2}+\ell-1)}{(2N+\frac{3n}{2})\cdots(2N+\frac{3n}{2}+\ell-1)}\right |
\le \sum_{\ell=1}^{\infty} \left |\binom{\delta_0}{\ell}\right |
<\infty,
\end{equation}
where the last series converges because
$$\left|\binom{\delta_0}{\ell}\right|
=\left |\frac{\delta_0(\delta_0-1)\cdots(\delta_0-\ell+1)}{1\cdot 2\cdots\ell}\right |
\le C \ell^{-1-\delta_0}.$$

We will need the following asymptotic formula for the Jacobi polynomial (cf. Szeg\"{o} \cite[Theorem~8.21.8]{Szegoe1975}).

\begin{lemma}\label{lem:jacobi-poly-asympt}
For every $t=\cos |x-y|\in (-1,1)$, 
$$P_N^{(n-\frac{1}{2},\frac{n-2}{2})}(t)
=N^{-\frac{1}{2}} k(x,y)\cos\left (\Big(N+\frac{3n-1}{4}\Big)|x-y|-\frac{n\pi}{2}\right )+O(N^{-\frac{3}{2}})$$
where
\begin{equation}\label{eq:k(x,y)}
k(x,y)=\pi^{-\frac{1}{2}} \left( \sin \frac{|x-y|}{2} \right)^{-n}
\left( \sin \frac{|x-\hat y|}{2} \right)^{-\frac{n-1}{2}}.
\end{equation}
\end{lemma}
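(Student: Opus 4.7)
The plan is to derive Lemma \ref{lem:jacobi-poly-asympt} by specializing the classical Darboux asymptotic expansion for Jacobi polynomials (Szeg\"o \cite[Theorem~8.21.8]{Szegoe1975}). That result asserts that, for $\theta$ in any compact subinterval of $(0,\pi)$,
\begin{equation*}
P_N^{(\alpha,\beta)}(\cos\theta) = \frac{1}{\sqrt{\pi N}} \left(\sin\tfrac{\theta}{2}\right)^{-\alpha - \frac{1}{2}} \left(\cos\tfrac{\theta}{2}\right)^{-\beta - \frac{1}{2}} \cos\left(\bigl(N + \tfrac{\alpha+\beta+1}{2}\bigr)\theta - \tfrac{(2\alpha+1)\pi}{4}\right) + O(N^{-3/2}),
\end{equation*}
so the task reduces to substituting $\alpha = n - \tfrac{1}{2}$, $\beta = \tfrac{n-2}{2}$, $\theta = |x-y|$, and verifying that the four resulting constants match those asserted in the lemma.

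A direct calculation will give $-\alpha - \tfrac{1}{2} = -n$, $-\beta - \tfrac{1}{2} = -\tfrac{n-1}{2}$, $\tfrac{\alpha+\beta+1}{2} = \tfrac{3n-1}{4}$, and $\tfrac{(2\alpha+1)\pi}{4} = \tfrac{n\pi}{2}$. These account for every piece of the asymptotic formula except the second factor in \eqref{eq:k(x,y)}, which involves $\sin\tfrac{|x-\hat y|}{2}$ rather than $\cos\tfrac{|x-y|}{2}$. To bridge this, I would invoke the antipodal identity $|x-\hat y| = \pi - |x-y|$, which follows immediately from $\cos|x-\hat y| = \langle x, -y\rangle = -\cos|x-y|$. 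It then gives $\cos\tfrac{|x-y|}{2} = \sin\tfrac{|x-\hat y|}{2}$, matching \eqref{eq:k(x,y)} exactly.

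I do not anticipate any real obstacle here, as the lemma is essentially a bookkeeping specialization of a well-known expansion. The only minor subtlety worth flagging is that Szeg\"o's error term $O(N^{-3/2})$ is uniform only on compact subsets of $(0,\pi)$; the implicit constant degenerates as $\theta \to 0$ or $\theta \to \pi$, i.e.\ as $t \to \pm 1$. Since Lemma \ref{lem:jacobi-poly-asympt} is stated pointwise in $t \in (-1,1)$, this poses no problem: the implicit constant in $O(N^{-3/2})$ is simply allowed to depend on $t$, and any later application will have to localize away from $t = \pm 1$ by hand.
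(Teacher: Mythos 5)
Your proposal is correct and matches the paper's treatment: the paper proves this lemma simply by citing Szeg\"o's Theorem~8.21.8 and specializing $\alpha=n-\tfrac12$, $\beta=\tfrac{n-2}{2}$, exactly the bookkeeping you carry out (including the identity $\cos\tfrac{|x-y|}{2}=\sin\tfrac{|x-\hat y|}{2}$ via $|x-\hat y|=\pi-|x-y|$). Your remark about the $O(N^{-3/2})$ constant being uniform only on compact subsets of $(0,\pi)$, hence $t$-dependent in the pointwise statement, is also consistent with how the lemma is later applied.
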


To estimate $E_N(x,y)$ in Lemma \ref{lem:jacobi}, we will use the following observation (see also \cite[Lemma~2.4]{BonamiClerc1973} for a related result). 

\begin{lemma}\label{lem:major-a}
Let $\{a_k\}_{k=0}^{\infty}$ be a sequence of numbers and let $\delta>-1$. Denote
$$S^\delta_N = \frac{1}{A_N^\delta}\sum_{k=0}^{N} A_{N-k}^\delta a_k.$$
Then for any $\rho>0$,
\begin{equation}\label{eq:major-S-rho}
|S^{\delta+\rho}_N| \le \max_{0\le k\le N} |S^{\delta}_k|,\ N=0, 1, \cdots.
\end{equation}
\end{lemma}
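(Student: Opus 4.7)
The plan is to express $S_N^{\delta+\rho}$ as a weighted average of $S_0^\delta, S_1^\delta,\ldots, S_N^\delta$ with nonnegative weights summing to one, at which point the bound \eqref{eq:major-S-rho} is immediate. The key is an iterated-Cesàro identity of the form
\[
A_N^{\delta+\rho}\, S_N^{\delta+\rho}
=\sum_{k=0}^{N} A_{N-k}^{\rho-1}\,A_k^{\delta}\,S_k^{\delta},
\qquad
A_N^{\delta+\rho}
=\sum_{k=0}^{N} A_{N-k}^{\rho-1}\,A_k^{\delta},
\]
from which the convex-combination representation follows.

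To derive both identities cleanly, I would use generating functions. From \eqref{eq:taylor},
\[
\sum_{N=0}^{\infty} A_N^{\delta} S_N^{\delta}\, x^N
=\left(\sum_{k=0}^{\infty} A_k^{\delta} x^k\right)\left(\sum_{k=0}^{\infty} a_k x^k\right)
=(1-x)^{-1-\delta}\, a(x),
\]
where $a(x)=\sum a_k x^k$ is the formal generating function of $\{a_k\}$. Multiplying through by $(1-x)^{-\rho}=\sum_{k=0}^\infty A_k^{\rho-1} x^k$ and matching coefficients yields the first identity; taking the special case $a_k\equiv 1$ (so that $S_k^\delta\equiv 1$, as follows from the well-known identity $\sum_{k=0}^N A_{N-k}^\delta = A_N^{\delta+1}$) gives the second.

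The only point that needs to be checked is positivity of the weights. For $\delta>-1$ one has $A_k^\delta>0$ directly from \eqref{eq:A-delta-k}, and for $\rho>0$ the same formula gives $A_k^{\rho-1}=\rho(\rho+1)\cdots(\rho+k-1)/k!>0$ (with the convention $A_0^{\rho-1}=1$). Hence the numbers $A_{N-k}^{\rho-1}A_k^{\delta}$ are nonnegative and sum to $A_N^{\delta+\rho}$, so
\[
S_N^{\delta+\rho}
=\sum_{k=0}^{N} \frac{A_{N-k}^{\rho-1}A_k^{\delta}}{A_N^{\delta+\rho}}\,S_k^{\delta}
\]
is a convex combination, and the triangle inequality gives \eqref{eq:major-S-rho}.

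I do not anticipate a genuine obstacle here: the argument is essentially a formal manipulation of Cauchy products, and the positivity needed for the convex-combination step is guaranteed by the hypotheses $\delta>-1$ and $\rho>0$. The only mild subtlety is making sure the generating-function manipulations are interpreted as identities on formal power series (so that no convergence issues for arbitrary $\{a_k\}$ intrude), which is handled by reading off the coefficient of $x^N$ on both sides.
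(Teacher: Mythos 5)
Your argument is essentially the paper's: the same convolution identity $A_N^{\delta+\rho}=\sum_{k=0}^N A_{N-k}^{\rho-1}A_k^\delta$ (the paper's \eqref{eq:summation}, obtained there from $(1-x)^{-1-\delta-\rho}=(1-x)^{-1-(\rho-1)}(1-x)^{-1-\delta}$ and \eqref{eq:taylor}), the same representation of $S_N^{\delta+\rho}$ as a nonnegatively weighted average of $S_0^\delta,\dots,S_N^\delta$ (the paper's \eqref{eq:summation-S}), and the triangle inequality; deriving the averaging identity by Cauchy products of formal power series rather than by interchanging the order of summation is only a cosmetic difference. One step in your write-up is wrong, though easily repaired: with $a_k\equiv 1$ you do \emph{not} get $S_k^\delta\equiv 1$. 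These are Ces\`aro means of the \emph{series} $\sum_k a_k$, so $a_k\equiv1$ corresponds to a divergent series and in fact
\begin{equation*}
S_N^\delta=\frac{1}{A_N^\delta}\sum_{k=0}^N A_{N-k}^\delta=\frac{A_N^{\delta+1}}{A_N^\delta}=\frac{N+\delta+1}{\delta+1},
\end{equation*}
which is exactly what your quoted identity $\sum_{k=0}^N A_{N-k}^\delta=A_N^{\delta+1}$ shows. The special case that gives $S_k^\delta\equiv1$ is $a_0=1$, $a_k=0$ for $k\ge1$; alternatively (and more directly), the weight-sum identity $A_N^{\delta+\rho}=\sum_{k=0}^N A_{N-k}^{\rho-1}A_k^\delta$ needs no special case at all --- it is just the coefficient identity from $(1-x)^{-\rho}(1-x)^{-1-\delta}=(1-x)^{-1-\delta-\rho}$, which is how the paper obtains it. With that correction your proof coincides with the paper's; your explicit verification that the weights $A_{N-k}^{\rho-1}A_k^\delta$ are positive (from $\delta>-1$, $\rho>0$) is a detail the paper leaves implicit and is worth keeping.
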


\begin{proof}
By writing
$$(1-x)^{-1-\delta-\rho}=(1-x)^{-1-(\rho-1)}(1-x)^{-1-\delta}$$
and using \eqref{eq:taylor}, we see that
\begin{equation}\label{eq:summation}
A_{N}^{\delta+\rho}=\sum_{\ell=0}^{N} A_{\ell}^{\rho-1} A_{N-\ell}^\delta.
\end{equation}
Thus, we can write
\begin{align*}
S^{\delta+\rho}_N
&=\frac{1}{A_N^{\delta+\rho}}\sum_{k=0}^{N} A_{N-k}^{\delta+\rho} a_k\\
&=\frac{1}{A_N^{\delta+\rho}}\sum_{k=0}^{N} \left (\sum_{\ell=0}^{N-k} A_{\ell}^{\rho-1} A_{N-k-\ell}^\delta \right ) a_k\\
&=\frac{1}{A_N^{\delta+\rho}}\sum_{\ell=0}^{N} A_{\ell}^{\rho-1} {A_{N-\ell}^\delta} \left (\frac{1}{A_{N-\ell}^\delta}\sum_{k=0}^{N-\ell} A_{N-\ell-k}^\delta a_k\right ),
\end{align*}
that is,
\begin{equation}\label{eq:summation-S}
S^{\delta+\rho}_N=\frac{1}{A_N^{\delta+\rho}}\sum_{\ell=0}^{N}  A_{\ell}^{\rho-1} {A_{N-\ell}^\delta} S_{N-\ell}^\delta.
\end{equation}
Now bounding 
$$|S_{N-\ell}^\delta|\le \max_{0\le k\le N} |S^{\delta}_k|,\ 0\le\ell\le N,$$
and using \eqref{eq:summation} again, the inequality \eqref{eq:major-S-rho} follows.
\end{proof}

We will also need the following estimates for $K_N^{\delta_0+1}$. See \cite[Corollary~2.5]{BonamiClerc1973}.

\begin{lemma}\label{lem:K^{delta+1}}
(i) When $|x-y|\le \pi/2$,
$$|K_N^{\delta_0+1}(x,y)|\le C N^n,\ N=1,2,\cdots.$$
(ii) When $2/N\le |x-y|\le \pi/2$,
$$|K_N^{\delta_0+1}(x,y)|\le C N^{-1} |x-y|^{-n-1}.$$
\end{lemma}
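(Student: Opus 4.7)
The plan is to handle the two parts separately: (i) is essentially elementary, while (ii) rests on an analogue of Lemma \ref{lem:jacobi} with $\delta_0$ replaced by $\delta_0+1$.

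For part (i), I would start from the pointwise bound $|Z_k(x,y)|\le Z_k(x,x)=\dim\mathcal H_k^n\le C(k+1)^{n-1}$ (reproducing kernel combined with Cauchy--Schwarz, or equivalently $|C_k^\lambda(\cos\theta)|\le C_k^\lambda(1)\le Ck^{n-2}$ together with \eqref{eq:zonal-def}). Since $\delta_0+1>0$, the sequence $\{A_m^{\delta_0+1}\}$ is increasing and positive, so the weights $A_{N-k}^{\delta_0+1}/A_N^{\delta_0+1}$ all lie in $[0,1]$ for $0\le k\le N$, and term-by-term summation gives
$$|K_N^{\delta_0+1}(x,y)|\le\sum_{k=0}^N|Z_k(x,y)|\le C\sum_{k=0}^N(k+1)^{n-1}\le CN^n.$$
This argument in fact does not use the constraint $|x-y|\le\pi/2$.

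For part (ii), the plan is to re-run the Szeg\"{o} identity underlying Lemma \ref{lem:jacobi} but with the Ces\`aro parameter increased by one, producing a decomposition
$$K_N^{\delta_0+1}(x,y)=\widetilde C_N\,P_N^{(n+\frac{1}{2},\frac{n-2}{2})}(\cos|x-y|)+\widetilde E_N(x,y).$$
The first Jacobi upper index shifts up by one and the normalization picks up an extra factor of $A_N^1\sim N$ in the denominator, so that $\widetilde C_N=O(N^{-1/2})$ in place of the $C_N\sim N^{1/2}$ in \eqref{eq:C_N-asympt}. The analogue of Lemma \ref{lem:jacobi-poly-asympt} for parameters $(\alpha,\beta)=(n+\tfrac{1}{2},\tfrac{n-2}{2})$ (Szeg\"{o} \cite[Theorem~8.21.8]{Szegoe1975}) produces a leading amplitude
$$k'(x,y)=\pi^{-1/2}\Big(\sin\tfrac{|x-y|}{2}\Big)^{-n-1}\Big(\sin\tfrac{|x-\hat y|}{2}\Big)^{-\frac{n-1}{2}}=O(|x-y|^{-n-1})$$
together with an $O(N^{-3/2})$ remainder; multiplication by $\widetilde C_N$ produces the desired $O(N^{-1}|x-y|^{-n-1})$ for the main term, and the $O(N^{-2})$ coming from $\widetilde C_N\cdot O(N^{-3/2})$ is subordinate on $|x-y|\ge 2/N$.

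The residual $\widetilde E_N$ is a series $\sum_{\ell\ge 1}c_\ell\,K_N^{\delta_0+1+\ell}(x,y)$ whose coefficients satisfy the analogue of \eqref{eq:abs-conv}, namely $|c_\ell|=O\big(\ell^{-1-\delta_0}2^{-\ell}\big)$. Iterating the same decomposition $\ell$ further times yields $|K_N^{\delta_0+1+\ell}(x,y)|\le CN^{-1-\ell}|x-y|^{-n-1-\ell}$ on the range $|x-y|\in[2/N,\pi/2]$; summing against $|c_\ell|$ converts the estimate into a geometric series in $(2N|x-y|)^{-1}\le 1/4$, which collapses to $O(N^{-1}|x-y|^{-n-1})$ and matches the main-term bound. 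The principal technical obstacle is the Gamma-function bookkeeping that pins down $\widetilde C_N\sim N^{-1/2}$ and the uniformity check on the shifted Jacobi asymptotic; both amount to the same mechanical calculations as those implicit in Lemmas \ref{lem:jacobi}--\ref{lem:jacobi-poly-asympt} and carried out in Bonami--Clerc \cite{BonamiClerc1973}.
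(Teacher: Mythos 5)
First, note that the paper itself does not prove this lemma: it is quoted directly from Bonami--Clerc \cite[Corollary~2.5]{BonamiClerc1973}, so your argument has to be measured against their proof rather than anything in the text. Your part (i) is correct and complete: since $\lambda>0$, $|C_k^\lambda(\cos\theta)|\le C_k^\lambda(1)$, hence $|Z_k(x,y)|\le Z_k(x,x)=\dim\mathcal H_k^n\le C(k+1)^{n-1}$, and because $0\le A_{N-k}^{\delta_0+1}\le A_N^{\delta_0+1}$ the term-by-term bound gives $CN^n$ (indeed without using $|x-y|\le\pi/2$). Your part (ii) follows the same circle of ideas as Bonami--Clerc (Szeg\"o's identity with the Ces\`aro parameter raised by one, plus Jacobi estimates), so the route is not genuinely different; the issue is that, as written, it has a real gap.

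The gap is in the treatment of the error term $\widetilde E_N=\sum_{\ell\ge1}c_\ell\,K_N^{\delta_0+1+\ell}$. You propose to bound $|K_N^{\delta_0+1+\ell}|\le CN^{-1-\ell}|x-y|^{-n-1-\ell}$ by ``iterating the same decomposition $\ell$ further times,'' but this is an infinite regress: each application of the identity at order $\delta_0+1+\ell$ produces yet another infinite series of still higher-order kernels, and no base case is ever reached, so the iteration proves nothing by itself. Even granting the shape of the bound for each fixed $\ell$, you would need the constant to be uniform (or at worst subgeometric) in $\ell$: the Jacobi parameter of the main term grows linearly in $\ell$ and the constants in Szeg\"o's estimates depend on the parameters, so the gain $(N|x-y|)^{-\ell}\le 2^{-\ell}$ does not automatically absorb them. (Your claimed coefficient decay $|c_\ell|=O(\ell^{-1-\delta_0}2^{-\ell})$ is also not uniform in $\ell$ for fixed $N$ --- the Pochhammer ratio in \eqref{eq:abs-conv} tends to $1$ as $\ell\to\infty$ --- though this is harmless since it is at most $1$ and the binomial factor is summable.) A correct completion either establishes the kernel bound directly for all sufficiently large orders (e.g.\ by repeated summation by parts against the explicit partial-sum kernel, which is a single Jacobi polynomial) and then descends finitely many steps --- in effect what Bonami--Clerc do --- or simply cites their Corollary~2.5, as the paper does. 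A second, more minor and repairable, flaw: Szeg\"o's Theorem~8.21.8 gives the asymptotics with an $O(N^{-3/2})$ error only on compact subsets of $(0,\pi)$, not uniformly down to $|x-y|\sim 2/N$; for the main term you should instead use the uniform bound $|P_N^{(\alpha,\beta)}(\cos\theta)|\le CN^{-1/2}\theta^{-\alpha-\frac12}$ for $c/N\le\theta\le\pi/2$ (Szeg\"o, (7.32.5)), which, together with the verification that $\widetilde C_N=O(N^{-1/2})$, yields exactly $|\widetilde C_N\,P_N^{(n+\frac12,\frac{n-2}{2})}(\cos|x-y|)|\le CN^{-1}|x-y|^{-n-1}$.
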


Let $\nu$ be a finite Borel measure on $\mathbb S^n$. Denote by 
\begin{equation}\label{eq:hardy-littlewood-def}
M(\nu)(x)=\sup_{r>0}\frac{|\nu|(B(x,r))}{|B(x,r)|},\ x\in\mathbb S^n
\end{equation}
the Hardy-Littlewood maximal function of $\nu$ on $\mathbb S^n$, where
$$B(x,r)=\big\{y\in \mathbb S^n: |x-y|<r\big\}.$$
Note that, for some constant $C>1$, 
\begin{equation}\label{eq:AD-regularity}
r^n/C\le |B(x,r)|\le C r^n,\ 0<r\le \pi
\end{equation}
Using a standard argument (cf. \cite[Theorem~2.2]{Heinonen2001}), we have the weak $(1,1)$ inequality 
\begin{equation}\label{eq:weak-1-1}
\big|\{x\in\mathbb S^n: M(\nu)(x)>t\}\big|\le C\frac{\|\nu\|}{t},\ t>0.
\end{equation}
In particular, by taking $t\rightarrow\infty$, we have
$$M(\nu)(x)<\infty,\ \text{a.e. } x\in\mathbb S^n.$$
Using Lemma \ref{lem:K^{delta+1}} and a dyadic decomposition, we also have 
\begin{equation}\label{eq:maximal-K^{delta+1}}
\sup_{N\ge 0} \left |\int_{|x-y|\le \pi/2} K_N^{\delta_0+1}(x,y)\,d\nu(y)\right |
\le C M(\nu)(x),\ x\in\mathbb S^n.
\end{equation}

Now, following the notations in Lemma \ref{lem:jacobi}, let us denote
\begin{align}
&\widetilde K_N^{(0)}(x,y)=C_N P_N^{(n-\frac{1}{2},\frac{n-2}{2})}(\cos|x-y|)\,\mathds 1_{\{|x-y|\le \pi/2\}},\label{eq:tilde-K-0}\\
&E_N^{(0)}(x,y)=E_N(x,y)\,\mathds 1_{\{|x-y|\le \pi/2\}}.\label{eq:E-0}
\end{align}
We can then further decompose
\begin{equation}\label{eq:K-decompose-3}
K_N(x,y)=\widetilde K_N^{(0)}(x,y) + E_N^{(0)}(x,y) + K_N^{(\pi)}(x,y),\ N=1,2,\cdots.
\end{equation}
Applying Lemma \ref{lem:major-a} with $\delta=\delta_0+1$, 
$$a_k=\int_{|x-y|\le \pi/2} Z_k(x,y)\,d\nu(y),$$
and $\rho=1,2,\cdots$, we see that for $\ell=1,2,3,\cdots$,
$$\left |\int_{|x-y|\le \pi/2} K_N^{\delta_0+\ell}(x,y)\,d\nu(y)\right |
\le \sup_{N\ge 0} \left |\int_{|x-y|\le \pi/2} K_N^{\delta_0+1}(x,y)\,d\nu(y)\right |.$$
Thus, it follows from \eqref{eq:abs-conv} that
\begin{equation}\label{eq:major-K-ell}
|E_N^{(0)}*\nu(x)|\le C \sup_{N\ge 0} \left |\int_{|x-y|\le \pi/2} K_N^{\delta_0+1}(x,y)\,d\nu(y)\right |,\ N=1,2,\cdots.
\end{equation}
Combining this with Lemma \ref{lem:antipodal} and \eqref{eq:maximal-K^{delta+1}}, we obtain the desired estimate.

\begin{lemma}\label{lem:decompose}
There exists a constant $C>0$, such that for any finite Borel measure $\nu$ on $\mathbb S^n$,
$$ \big|(K_N-\widetilde K_N^{(0)})*\nu\big|\le C M(\nu)+C k^{(\pi)}*|\nu|,\ N=1,2,\cdots,$$
where
$$k^{(\pi)}(x,y)=|x-\hat y|^{-\frac{n-1}{2}}.$$
\end{lemma}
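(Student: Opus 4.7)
The plan is essentially to assemble the three ingredients already stated just before Lemma \ref{lem:decompose}. Starting from the decomposition \eqref{eq:K-decompose-3},
$$K_N(x,y)=\widetilde K_N^{(0)}(x,y)+E_N^{(0)}(x,y)+K_N^{(\pi)}(x,y),$$
we have $K_N-\widetilde K_N^{(0)}=E_N^{(0)}+K_N^{(\pi)}$, so by the triangle inequality it suffices to bound $|E_N^{(0)}*\nu|$ and $|K_N^{(\pi)}*\nu|$ separately and uniformly in $N\ge 1$.

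For the antipodal piece $K_N^{(\pi)}*\nu$, I would invoke Lemma \ref{lem:antipodal} pointwise inside the integral: since $|K_N^{(\pi)}(x,y)|\le C|x-\hat y|^{-(n-1)/2}=Ck^{(\pi)}(x,y)$ for all $N$, I get
$$|K_N^{(\pi)}*\nu(x)|\le \int_{\mathbb S^n}|K_N^{(\pi)}(x,y)|\,d|\nu|(y)\le C\,k^{(\pi)}*|\nu|(x),$$
which yields the second term on the right-hand side of the claim. Note the appearance of $|\nu|$ rather than $\nu$ is automatic since we had to pass absolute values inside the integral against the measure.

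For the error piece $E_N^{(0)}*\nu$, I would chain the two estimates already displayed in the excerpt: inequality \eqref{eq:major-K-ell} (which itself came from applying Lemma \ref{lem:major-a} with $\delta=\delta_0+1$, $\rho=1,2,\dots$, combined with the absolute convergence \eqref{eq:abs-conv}) gives
$$|E_N^{(0)}*\nu(x)|\le C\sup_{N\ge 0}\left|\int_{|x-y|\le\pi/2}K_N^{\delta_0+1}(x,y)\,d\nu(y)\right|,$$
and then the maximal-function bound \eqref{eq:maximal-K^{delta+1}} (derived from Lemma \ref{lem:K^{delta+1}} via a dyadic annular decomposition around $x$, combined with \eqref{eq:AD-regularity}) majorizes the right-hand side by $CM(\nu)(x)$.

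Adding the two bounds produces the claimed inequality with constants independent of $N$ and of $\nu$. There is no real obstacle here: all the nontrivial work, in particular the asymptotics of $C_N$, the kernel identity in Lemma \ref{lem:jacobi}, the Tauberian-style comparison of Ces\`aro means in Lemma \ref{lem:major-a}, and the pointwise/maximal estimates for $K_N^{\delta_0+1}$ and $K_N^{(\pi)}$, has already been carried out; Lemma \ref{lem:decompose} is the clean packaging of these estimates into a single pointwise inequality suitable for the divergence argument of Section \ref{sec:divergence}.
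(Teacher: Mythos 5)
Your proposal is correct and follows exactly the same route as the paper, which proves Lemma \ref{lem:decompose} by combining the decomposition \eqref{eq:K-decompose-3} with Lemma \ref{lem:antipodal} for the antipodal part and the chain \eqref{eq:major-K-ell}--\eqref{eq:maximal-K^{delta+1}} for the error term $E_N^{(0)}*\nu$. Nothing is missing; the lemma is indeed just the packaging of these already-established estimates.
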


\section{Almost everywhere divergence of Ces\`{a}ro means}\label{sec:divergence}

With Lemma \ref{lem:decompose}, we can now prove the main theorem. 

\begin{theorem}\label{thm:cesaro-divergence}
There exists a function $f\in L^1(\mathbb S^n)$ such that
$$\limsup_{N\rightarrow\infty}\big|S_N^{\frac{n-1}{2}} f(x)\big|=\infty,\ \text{a.e. } x\in\mathbb S^n.$$ 
\end{theorem}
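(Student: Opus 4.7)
My plan is to construct $f$ by smoothing carefully chosen point-mass measures, using Lemma~\ref{lem:decompose} to reduce everything to the oscillatory main term $\widetilde K_N^{(0)}$. For any finite Borel measure $\nu$, Lemma~\ref{lem:decompose} gives $|(K_N-\widetilde K_N^{(0)})*\nu(x)|\le CM(\nu)(x)+C(k^{(\pi)}*|\nu|)(x)$, and both terms are finite almost everywhere---the first by \eqref{eq:weak-1-1}, the second because $|x-\hat y|^{-(n-1)/2}$ is integrable on $\mathbb{S}^n$. So it suffices to produce an input for which $\limsup_N|\widetilde K_N^{(0)}*\nu(x)|=\infty$ almost everywhere. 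From Lemma~\ref{lem:jacobi-poly-asympt} and \eqref{eq:C_N-asympt}, up to an $O(N^{-1})$ error
$$\widetilde K_N^{(0)}(x,y)=c_n\,k(x,y)\cos\!\Big((N+\tfrac{3n-1}{4})|x-y|-\tfrac{n\pi}{2}\Big)\mathds{1}_{\{|x-y|\le\pi/2\}},$$
where $c_n>0$ and $k(x,y)\sim|x-y|^{-n}$ near the diagonal.

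For each large $M$ I would choose $M$ distinct, non-antipodal points $\{x_1,\ldots,x_M\}\subset\mathbb{S}^n$ forming a roughly $M^{-1/n}$-dense packing, and set $\mu_M=\tfrac{1}{M}\sum_i\delta_{x_i}$. By Lemma~\ref{lem:point-masses}, for almost every $x$ the distances $\{|x-x_i|\}_{i=1}^M$ are linearly independent over $\mathbb{Q}$; Kronecker's theorem then yields a subsequence $N_j\to\infty$ (depending on $x$) along which every cosine $\cos\!\bigl((N_j+\tfrac{3n-1}{4})|x-x_i|-\tfrac{n\pi}{2}\bigr)$ with $|x-x_i|\le\pi/2$ simultaneously tends to $1$. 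Hence
$$\limsup_N\,\widetilde K_N^{(0)}*\mu_M(x)\ge\frac{c_n}{M}\sum_{|x-x_i|\le\pi/2}k(x,x_i).$$
Using Lemma~\ref{lem:Riemann-sum} and $k(x,y)\sim|x-y|^{-n}$, this Riemann sum---after removing the $O(1)$ contribution from the single nearest neighbor---approximates $\int_{M^{-1/n}\le|x-y|\le\pi/2}|x-y|^{-n}\,dy\sim\log M$ for almost every $x$. Therefore $\limsup_N|K_N*\mu_M(x)|\ge c\log M$ almost everywhere while $\|\mu_M\|=1$.

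To produce a single $f\in L^1$ I would first smooth each $\mu_M$ to an $L^1$ function $f_M$ of unit norm at a scale much smaller than both $M^{-1/n}$ and $1/N_j$, so the bound $\limsup_N|K_N*f_M(x)|\ge c\log M$ persists almost everywhere. The family $\{f_M\}$ then witnesses a quantitative failure of the weak $(1,1)$ inequality for the maximal operator $S^*g=\sup_N|S_N^{\delta_0}g|$, and I would invoke a Stein-type continuity principle (applicable because $K_N(x,y)$ depends only on $|x-y|$ and $\mathbb{S}^n$ is transitive under $SO(n+1)$) to extract an $f\in L^1$ with $S^*f=\infty$ on a set of positive measure; superposing countably many rotated copies gives $S^*f=\infty$ almost everywhere, which is equivalent (for real sequences) to $\limsup_N|S_N^{\delta_0}f|=\infty$ almost everywhere.

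The hard part will be the last consolidation. The smoothing of $\mu_M$ must be done at a scale that simultaneously respects Kronecker's cosine alignment (beating $1/N_j$) and the Riemann-sum approximation (beating $M^{-1/n}$), which requires a version of the Jacobi asymptotic uniform in both $N$ and $|x-y|$ down to this scale and across all $M$ points. And the passage from $\sup_N|K_N f|=\infty$ on a positive-measure set to almost everywhere divergence by superposition of rotated copies must be arranged so that the contributions from non-dominant pieces do not cancel the divergent term at any given $x$---this is where the detailed geometric and arithmetic structure of the configuration, rather than mere norms, enters.
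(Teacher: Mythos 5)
Your quantitative core—reducing to $\widetilde K_N^{(0)}$ via Lemma~\ref{lem:decompose}, taking a uniform probability measure on a dense, non-antipodal packing, using rational independence plus Kronecker to align the cosines, and the Riemann-sum bound to get a $\log M$ lower bound—is exactly the paper's route (Lemmas~\ref{lem:Riemann-sum}, \ref{lem:point-masses}, \ref{lem:tilde-K-mu}). One correction there: since the error $CM(\mu_M)+Ck^{(\pi)}*\mu_M$ from Lemma~\ref{lem:decompose} is finite a.e.\ but not uniformly bounded, your claim ``$\limsup_N|K_N*\mu_M(x)|\ge c\log M$ almost everywhere'' is too strong; what you get, via \eqref{eq:weak-1-1} and Young/Chebyshev, is this bound off an exceptional set of measure $O(1/\log M)$ (this is precisely Lemma~\ref{lem:K-mu}), which is all that is needed downstream.

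The genuine gap is the consolidation step, which you flag but do not resolve, and as written it would fail. First, smoothing $\mu_M$ ``at a scale much smaller than $1/N_j$'' is vacuous: the Kronecker times $N_j$ tend to infinity, so no fixed mollification scale beats them, and no uniform Jacobi asymptotic will rescue this. The correct order of operations is the opposite: first replace $\limsup_N$ by $\max_{0\le N\le N_0}$ on a slightly smaller set (a standard limiting argument), and only then approximate $\mu_M$ by a function; the paper takes $f=S^{\delta_0+1}_{N_1}\mu$, for which the finitely many convolutions $K_N*f$, $N\le N_0$, converge uniformly to $K_N*\mu$ as $N_1\to\infty$, and whose $L^1$ norm is controlled by Lemmas~\ref{lem:antipodal}, \ref{lem:major-a}, \ref{lem:K^{delta+1}} (this is Lemma~\ref{lem:K-polynomial}). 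Second, your final extraction of a single $f$ is not a proof as stated: you quote the continuity principle as giving only divergence on a set of positive measure and then propose to ``superpose rotated copies,'' but arranging that the non-dominant pieces do not cancel the divergent one at a given $x$ is exactly the hard part, and it is what the paper's inductive gliding-hump choice of $\eta_j,f_j,N_j$ (conditions \eqref{eta<eta}, \eqref{etaK<1}, \eqref{eqn:sup>L}) accomplishes explicitly. If you prefer to avoid that construction, you can instead invoke Stein's maximal principle in its correct dichotomy form (the $S_N^{\delta_0}$ are bounded on $L^1(\mathbb S^n)$ and commute with the transitive action of $SO(n+1)$, and $p=1\le 2$): the quantitative failure of the weak $(1,1)$ inequality furnished by your $f_M$'s, once they are correctly produced as above, already yields an $f\in L^1$ with $\limsup_N|S_N^{\delta_0}f|=\infty$ almost everywhere, with no rotation superposition needed. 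Either way, the step you deferred is where the real work lies, and it must be filled in for the argument to be complete.
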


In a similar spirit as in \cite{Stein1961}, we will deduce Theorem \ref{thm:cesaro-divergence} from the following lemma. Denote by $\mathcal P(\mathbb S^n)$ the set of Borel probability measures on $\mathbb S^n$.

\begin{lemma}\label{lem:tilde-K-mu}
Given $L>1$, there exists a finitely supported measure $\mu\in\mathcal P(\mathbb S^n)$ such that
$$\limsup_{N\rightarrow\infty} 
\big|\widetilde K_N^{(0)}*\mu(x)\big|>L,\ \text{a.e. } x\in\mathbb S^n.$$
\end{lemma}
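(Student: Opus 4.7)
The plan is to let $\mu=\frac{1}{M}\sum_{j=1}^M\delta_{x_j}$ be the uniform measure on a finite set $x_1,\ldots,x_M\in\mathbb S^n$ chosen very carefully, and to derive the pointwise lower bound from three ingredients: the explicit asymptotic for $\widetilde K_N^{(0)}$, Kronecker's equidistribution theorem, and a singular Riemann-sum estimate for $k(x,\cdot)$.

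First I would combine Lemma~\ref{lem:jacobi-poly-asympt} with \eqref{eq:C_N-asympt} to write, for each $y$ bounded away from $\{x,\hat x\}$,
$$\widetilde K_N^{(0)}(x,y)=c_n\,k(x,y)\cos\!\Bigl(\bigl(N+\tfrac{3n-1}{4}\bigr)|x-y|-\tfrac{n\pi}{2}\Bigr)\mathds 1_{\{|x-y|\le\pi/2\}}+O(N^{-1}),$$
where $c_n>0$ is a dimensional constant and the implied constant is uniform on compact subsets avoiding $\{x,\hat x\}$. Consequently, whenever $\mu$ is supported on finitely many points distinct from $x$ and $\hat x$,
$$\widetilde K_N^{(0)}*\mu(x)=\frac{c_n}{M}\sum_{j\,:\,|x-x_j|\le\pi/2}k(x,x_j)\cos\!\Bigl(\bigl(N+\tfrac{3n-1}{4}\bigr)|x-x_j|-\tfrac{n\pi}{2}\Bigr)+o_N(1).$$

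Next I would fix a large integer $M$ (to be chosen in terms of $L$) and take $x_1,\ldots,x_M$ to be a sufficiently dense packing of $\mathbb S^n$ with $M\asymp h^{-n}$ points, arranged so that no two are antipodal. By Lemma~\ref{lem:point-masses}, for a.e.\ $x$ the numbers $|x-x_1|,\ldots,|x-x_M|$ are $\mathbb Q$-linearly independent together with $2\pi$; Kronecker's theorem then yields integers $N_l\to\infty$ along which every cosine above approaches $1$ simultaneously. Passing to the limsup,
$$\limsup_{N\to\infty}\bigl|\widetilde K_N^{(0)}*\mu(x)\bigr|\,\ge\,\frac{c_n}{M}\sum_{j\,:\,|x-x_j|\le\pi/2}k(x,x_j)\qquad\text{a.e.}$$

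Finally, I would invoke Lemma~\ref{lem:Riemann-sum} dyadically on $[h,\pi/2]$ to obtain, for a.e.\ $x$,
$$\frac{1}{M}\sum_{j\,:\,|x-x_j|\le\pi/2}k(x,x_j)\,\ge\,c_n'\log M,$$
the logarithm coming from $\int_h^{\pi/2}r^{-1}\,dr$: the $x_j$'s at distance in $[r,2r]$ from $x$ number $\asymp Mr^n$ and each carry $k(x,x_j)\asymp r^{-n}$, so each dyadic scale contributes order one to the average. It then suffices to choose $M$ so that $c_nc_n'\log M>L$. The principal obstacle is reconciling the two conflicting structural demands on $\{x_j\}$---regularity fine enough for the singular Riemann-sum bound to hold pointwise, together with rational independence of the distance functions for a.e.\ $x$---both of which are supplied by the deterministic construction behind Lemmas~\ref{lem:Riemann-sum} and~\ref{lem:point-masses}.
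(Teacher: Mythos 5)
Your proposal is correct and follows essentially the same route as the paper: the paper also takes $\mu$ uniform on a maximal $r$-separated (hence dense-packing) set with no antipodal pairs, combines the Jacobi asymptotic \eqref{eq:C_N-asympt} and Lemma~\ref{lem:jacobi-poly-asympt} with the rational-independence statement of Lemma~\ref{lem:point-masses}\,(i) and Kronecker's theorem to identify the limsup with $\frac{C}{m}\sum_j k^{(0)}(x,y_j)$, and then gets the $\log(\pi/r)\asymp\log M$ lower bound from the Riemann-sum estimate of Lemma~\ref{lem:Riemann-sum}, discarding the points with $|x-y_j|>\pi/2$ at the cost of a bounded constant. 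Choosing $r$ small (equivalently $M$ large) so the logarithm beats $L$ is exactly the paper's final step.
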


The proof of Lemma \ref{lem:tilde-K-mu} is postponed to the end of this section. Using Lemma \ref{lem:decompose}, we first show that Lemma \ref{lem:tilde-K-mu} implies the following (note that $\widetilde K_N^{(0)}$ is replaced by $K_N$).

\begin{lemma}\label{lem:K-mu}
Given $L>1$ and $\varepsilon>0$, there exists a finitely supported measure $\mu\in\mathcal P(\mathbb S^n)$ such that
$$\limsup_{N\rightarrow\infty} 
\big|K_N*\mu(x)\big|>L$$
holds on a set $E\subset \mathbb S^n$ with $|\mathbb S^n\backslash E|<\varepsilon$.
\end{lemma}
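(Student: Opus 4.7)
The plan is to combine the conclusion of Lemma \ref{lem:tilde-K-mu} for $\widetilde K_N^{(0)}$ with the comparison estimate in Lemma \ref{lem:decompose}. Since $K_N = \widetilde K_N^{(0)} + (K_N-\widetilde K_N^{(0)})$, whenever the error $|(K_N-\widetilde K_N^{(0)})*\mu|$ is pointwise bounded by a constant, the divergence of $\widetilde K_N^{(0)}*\mu$ transfers to $K_N*\mu$ up to a bounded additive loss. The task therefore reduces to making this error uniformly bounded on a set of measure at least $|\mathbb S^n|-\varepsilon$, as $\mu$ ranges over finitely supported probability measures.

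First I would establish uniform-in-$\mu\in\mathcal P(\mathbb S^n)$ distributional bounds for the two terms on the right side of Lemma \ref{lem:decompose}. The weak-$(1,1)$ estimate \eqref{eq:weak-1-1} gives $|\{M(\mu)>T\}|\le C/T$. For the antipodal term, Fubini together with the bound $\int_{\mathbb S^n}|x-\hat y|^{-(n-1)/2}\,dx\le C$ (finite because $\int_0^\pi r^{\,n-1-(n-1)/2}\,dr<\infty$) yields $\|k^{(\pi)}*\mu\|_{L^1(\mathbb S^n)}\le C$, so $|\{k^{(\pi)}*\mu>T\}|\le C/T$ by Chebyshev. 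Both bounds depend only on $\|\mu\|=1$, so I can pick $T=T(\varepsilon)$, independent of the particular $\mu$, such that the set
$$F_\mu := \bigl\{x\in\mathbb S^n : C\,M(\mu)(x) + C\,k^{(\pi)}*\mu(x) > T\bigr\}$$
satisfies $|F_\mu|<\varepsilon$, where $C$ is the constant appearing in Lemma \ref{lem:decompose}.

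With such a $T$ fixed, I would then apply Lemma \ref{lem:tilde-K-mu} with the threshold $L+T$ in place of $L$. This produces a finitely supported $\mu\in\mathcal P(\mathbb S^n)$ with $\limsup_{N\to\infty}|\widetilde K_N^{(0)}*\mu(x)|>L+T$ for a.e.\ $x\in\mathbb S^n$. On $E:=\mathbb S^n\setminus F_\mu$, Lemma \ref{lem:decompose} gives $|(K_N-\widetilde K_N^{(0)})*\mu(x)|\le T$ for every $N$, and the reverse triangle inequality yields
$$|K_N*\mu(x)| \ge |\widetilde K_N^{(0)}*\mu(x)| - T.$$
Taking $\limsup$ in $N$ produces $\limsup_{N\to\infty}|K_N*\mu(x)|>L$ on $E$, with $|\mathbb S^n\setminus E|<\varepsilon$, which is the desired statement.

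The argument has no serious obstacle; the only point that requires a moment of care is the uniformity of $T$ in the choice of $\mu$. This uniformity holds because both the weak-$(1,1)$ constant for the Hardy--Littlewood maximal operator on $\mathbb S^n$ and the $L^1$ norm of $k^{(\pi)}*\mu$ scale only with $\|\mu\|$, which is pinned at $1$ throughout the argument, so the choice of $T$ can be made before the particular measure $\mu$ supplied by Lemma \ref{lem:tilde-K-mu} is selected.
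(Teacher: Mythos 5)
Your proposal is correct and follows essentially the same route as the paper: invoke Lemma \ref{lem:tilde-K-mu} at a boosted threshold, control the difference $(K_N-\widetilde K_N^{(0)})*\mu$ via Lemma \ref{lem:decompose} together with the weak $(1,1)$ bound for $M(\mu)$ and the uniform $L^1$ (Young/Fubini plus Chebyshev) bound for $k^{(\pi)}*\mu$, and discard an exceptional set of measure less than $\varepsilon$. The only difference is bookkeeping: you fix the error threshold $T=T(\varepsilon)$ first and then apply Lemma \ref{lem:tilde-K-mu} with level $L+T$, whereas the paper takes a single large $\widetilde L$ serving both roles; the two are logically equivalent.
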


\begin{proof}
By Lemma \ref{lem:tilde-K-mu}, for any $\widetilde L>1$, we can find a finitely supported measure $\mu\in\mathcal P(\mathbb S^n)$ such that
$$\limsup_{N\rightarrow\infty} 
\big|\widetilde K_N^{(0)}*\mu(x)\big|>\widetilde L$$
holds on a set $\widetilde E\subset \mathbb S^n$ with $|\mathbb S^n\backslash \widetilde E|=0$. It follows that, on this set $\widetilde E$, 
$$\limsup_{N\rightarrow\infty} 
\big|K_N*\mu(x)\big|>\widetilde L/2$$
holds whenever
\begin{equation}\label{eq:error-less-L}
\limsup_{N\rightarrow\infty} 
\big|(K_N-\widetilde K_N^{(0)})*\mu(x)\big|\le \widetilde L/2.
\end{equation}
On the other hand, by Lemma \ref{lem:decompose}, we have
$$\limsup_{N\rightarrow\infty} \big|(K_N-\widetilde K_N^{(0)})*\mu\big|\le C M(\mu)+ C k^{(\pi)}*\mu.$$
Thus, by \eqref{eq:weak-1-1} and Young's inequality, 
$$\big|\{x\in\mathbb S^n: \eqref{eq:error-less-L} \text{ fails}\}\big|\le {C}/{\widetilde L}.$$
So, by choosing $\widetilde L$ large enough that 
$$\widetilde L/2>L,\quad {C}/{\widetilde L}<\varepsilon,$$
and letting
$$E=\widetilde E\,\backslash\{x\in\mathbb S^n: \eqref{eq:error-less-L} \text{ fails}\},$$
the lemma is established. 
\end{proof}

Next, we replace the measure $\mu$ in Lemma \ref{lem:K-mu} by a function $f\in L^1(\mathbb S^n)$. We will call $f$ a polynomial of degree $N$ if 
$$\text{proj}_N f\neq 0\quad\text{and}\quad\text{proj}_k f=0,\ \forall k\ge N+1.$$

\begin{lemma}\label{lem:K-polynomial}
Given $L>1$ and $\varepsilon>0$, there exist a polynomial $f$ with $\|f\|_{L^1(\mathbb S^n)}\le 1$, and an integer $N_0$, such that
$$\max_{0\le N\le N_0} 
\big|K_N*f(x)\big|>L$$
holds on a set $E\subset \mathbb S^n$ with $|\mathbb S^n\backslash E|<\varepsilon$.
\end{lemma}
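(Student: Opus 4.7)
The plan is to derive Lemma \ref{lem:K-polynomial} from Lemma \ref{lem:K-mu} by three successive reductions: (i) replace $\limsup_N$ by a maximum over $0\le N\le N_0$, (ii) replace the measure $\mu$ by an $L^1$ function $g$ of unit norm, and (iii) replace $g$ by a polynomial $f$ of $L^1$-norm at most $1$. Apply Lemma \ref{lem:K-mu} with parameters $\widetilde L$ and $\widetilde\varepsilon$, to be chosen sufficiently large and small, obtaining a finitely supported $\mu=\sum_j c_j\delta_{x_j}\in\mathcal P(\mathbb S^n)$ with $\limsup_{N\to\infty}|K_N*\mu(x)|>\widetilde L$ on a set $\widetilde E$ satisfying $|\mathbb S^n\setminus\widetilde E|<\widetilde\varepsilon$.

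For (i), the sets $E(M):=\{x:\max_{0\le N\le M}|K_N*\mu(x)|>\widetilde L\}$ increase in $M$ and $\bigcup_M E(M)\supset\widetilde E$, so continuity of measure from below supplies an $N_0$ with $|E(N_0)|\ge|\widetilde E|-\widetilde\varepsilon$. For (ii), set
$$g=\sum_j c_j\,\frac{\mathds 1_{B(x_j,r)}}{|B(x_j,r)|},$$
giving $g\in L^1(\mathbb S^n)$ with $\|g\|_{L^1(\mathbb S^n)}=1$. Each $K_N(x,y)$ is a polynomial in $\langle x,y\rangle$, hence uniformly continuous on $\mathbb S^n\times\mathbb S^n$, so $K_N*g\to K_N*\mu$ uniformly as $r\to 0$ for each fixed $N$. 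Since only the $N_0+1$ indices $N=0,\ldots,N_0$ matter, we fix $r$ small enough that $\|K_N*g-K_N*\mu\|_\infty<\widetilde L/3$ for all such $N$, so that $\max_{0\le N\le N_0}|K_N*g|>2\widetilde L/3$ on $E(N_0)$.

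For (iii), finite linear combinations of spherical harmonics are dense in $L^1(\mathbb S^n)$, since they are dense in $C(\mathbb S^n)$ by the Stone--Weierstrass theorem and $C(\mathbb S^n)$ is dense in $L^1(\mathbb S^n)$. Choose a polynomial $p$ with $\|p-g\|_{L^1(\mathbb S^n)}<\eta$ and set $f=p/\max(\|p\|_{L^1(\mathbb S^n)},1)$; then $\|f\|_{L^1(\mathbb S^n)}\le 1$ and $\|f-g\|_{L^1(\mathbb S^n)}\le 2\eta$. Because $K_N(x,y)$ is continuous on the compact product, $\|K_N\|_\infty\le C(N_0)$ for $N\le N_0$, and Young's inequality yields $\|K_N*f-K_N*g\|_\infty\le 2C(N_0)\eta$. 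Choosing $\eta$ so small that this is below $\widetilde L/3$ gives $\max_{0\le N\le N_0}|K_N*f|>\widetilde L/3$ on $E(N_0)$. Taking $\widetilde L>3L$ and $\widetilde\varepsilon<\varepsilon/2$ at the outset completes the argument. The main obstacle, and the sole reason for step (iii), is converting the measure $\mu$ to a polynomial without losing the divergence: this forces $f$ to have degree depending on both $\widetilde L/L$ and $\varepsilon$, but once $N_0$ is fixed it is soft, since $L^1$-density combines with the uniform boundedness of the finite family $\{K_N\}_{N\le N_0}$.
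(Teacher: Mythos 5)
Your proof is correct, but the step that converts the measure $\mu$ into a polynomial is genuinely different from the paper's. The paper takes $f=S^{\delta_0+1}_{N_1}\mu$, i.e.\ the supercritical Ces\`aro means of $\mu$ itself: this is automatically a polynomial, the difference $K_N*\mu-K_N*f$ is controlled by comparing the coefficients $A^{\delta_0+1}_{N_1-k}/A^{\delta_0+1}_{N_1}$ with $1$ and sending $N_1\to\infty$, and the bound $\|S^{\delta_0+1}_{N_1}\mu\|_{L^1}\le C$ (via Lemmas \ref{lem:antipodal}, \ref{lem:major-a}, \ref{lem:K^{delta+1}} and Young) gives a \emph{dimensional} constant $C$, after which one rescales by $C$ and demands $\widetilde L>4CL$. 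You instead mollify the point masses into a genuine $L^1$ function $g$ with $\|g\|_{L^1}=1$ (using uniform continuity of the finitely many kernels $K_0,\dots,K_{N_0}$), then invoke density of spherical polynomials in $L^1(\mathbb S^n)$ together with Young's inequality and the crude bound $\|K_N\|_\infty<\infty$ for $N\le N_0$; since the normalization $f=p/\max(\|p\|_{L^1},1)$ costs only $O(\eta)$ in $L^1$, you get away with $\widetilde L>3L$. Your route is softer and more elementary: it does not use the order-$(\delta_0+1)$ kernel estimates at all, only that each fixed $K_N$ is a bounded continuous zonal kernel, because $N_0$ is fixed before the approximation is made. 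What the paper's route buys is an explicit approximant with clean spectral structure and a quantitative, $N_0$-independent $L^1$ bound, reusing machinery already needed elsewhere (e.g.\ in Lemma \ref{lem:decompose} and in Section \ref{sec:summation}); but for the statement of Lemma \ref{lem:K-polynomial} this uniformity is not required, and your argument is complete as written (the small checks — measurability/openness of your sets $E(M)$, $\|f-g\|_{L^1}\le 2\eta$, and density of harmonic polynomial combinations via Stone--Weierstrass — all go through).
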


\begin{proof}
By Lemma \ref{lem:K-mu}, for any $\widetilde L>1$, we can find a finitely supported measure $\mu\in\mathcal P(\mathbb S^n)$ such that
$$\limsup_{N\rightarrow\infty} 
\big|K_N*\mu(x)\big|>\widetilde L$$
holds on a set $\widetilde E\subset \mathbb S^n$ with $|\mathbb S^n\backslash \widetilde E|<\varepsilon/2$. By a standard limiting argument, there exists an integer $N_0$ such that
\begin{equation}\label{eq:K-mu-L/2}
\max_{0\le N\le N_0} 
\big|K_N*\mu(x)\big|>\widetilde L/2
\end{equation}
holds on a set $E\subset \mathbb S^n$ with $|\mathbb S^n\backslash E|<\varepsilon$. Notice that
$$K_N*\mu=\sum_{k=0}^{N} \frac{A_{N-k}^{\delta_0}}{A_N^{\delta_0}} \text{proj}_k \mu.$$
If we let $f=S^{\delta_0+1}_{N_1}\mu$ with $N_1\ge N$, then $f$ is a polynomial of degree at most $N_1$; moreover, 
$$K_N*f=\sum_{k=0}^{N}  \frac{A_{N-k}^{\delta_0}}{A_N^{\delta_0}}\frac{A_{N_1-k}^{\delta_0+1}}{A_{N_1}^{\delta_0+1}} \text{proj}_k \mu.$$
Thus, for $N\le N_0\le N_1$, we can bound
\begin{align*}
\big|K_N*\mu-K_N*f\big|
& \le \sum_{k=0}^{N}  \frac{A_{N-k}^{\delta_0}}{A_N^{\delta_0}}
\left (1-\frac{A_{N_1-k}^{\delta_0+1}}{A_{N_1}^{\delta_0+1}}\right )|\text{proj}_k \mu|\\
& \le \left (1-\frac{A_{N_1-N_0}^{\delta_0+1}}{A_{N_1}^{\delta_0+1}}\right )  \sum_{k=0}^{N_0} \|Z_k\|_\infty.
\end{align*}
Note that the last expression converges to 0 as $N_1\rightarrow\infty$. If we choose $N_1$ large enough, then we obtain from \eqref{eq:K-mu-L/2} that
$$\max_{0\le N\le N_0} 
\big|K_N*f(x)\big|>\widetilde L/4,\ \forall x\in E.$$ 
On the other hand, it follows from Lemmas \ref{lem:antipodal}, \ref{lem:major-a}, \ref{lem:K^{delta+1}}, and Young's inequality that
$$\|f\|_{L^1(\mathbb S^n)}=\|S^{\delta_0+1}_{N_1}\mu\|_{L^1(\mathbb S^n)}\le C.$$
So, by considering the function $f/C$ and choosing $\widetilde L$ large enough that
$$\frac{\widetilde L}{4C}>L,$$
we obtain the desired properties.
\end{proof}

We are now ready to prove Theorem \ref{thm:cesaro-divergence}. 

\begin{proof}[Proof of Theorem \ref{thm:cesaro-divergence}]
The function $f$ will be taken to be of the form
$$f=\sum_{j=0}^\infty \eta_j f_j,$$
where $\{\eta_j\}_{j=0}^\infty\in\ell^1$ is a sequence of positive numbers, and each $f_j$ is a polynomial satisfying $\|f_j\|_{L^1(\mathbb S^n)}\le 1$. Obviously, $$\|f\|_{L^1(\mathbb S^n)}\le \sum_{j=0}^\infty \eta_j<\infty.$$
Each $f_j$ will have an associated integer, denoted by $N_j$. The choice of $\eta_j, f_j, N_j$ is based on induction. More precisely, set $\eta_0=1, f_0=0, N_0=0$. 
Assuming that $\eta_{j-1}$, $f_{j-1}$, $N_{j-1}$ have been chosen, we now describe how we choose $\eta_j, f_j, N_j$. 

First, we choose $\eta_j>0$ small enough so that
\begin{equation}\label{eta<eta}
\eta_j\le\eta_{j-1}/2
\end{equation}
and so that
\begin{equation}\label{etaK<1}
\eta_j \max_{0\le N\le N_{j-1}}
\big\|K_N\big\|_{\infty}\le 1.
\end{equation}
With $\eta_j$ chosen, by Lemma \ref{lem:K-polynomial} we can find a polynomial $f_j$ with $\|f_j\|_{L^1(\mathbb S^n)}\le 1$ and an integer $N_j$, such that
\begin{equation}\label{eqn:sup>L}
\eta_j\max_{0\le N\le N_j} 
\big|K_N*f_j(x)\big|
>\sup_{N\ge 0}\Big \|K_N*(\eta_0 f_0+\cdots+\eta_{j-1} f_{j-1})\Big \|_\infty+j
\end{equation}
holds on a set $E_j\subset \mathbb S^n$ with $|\mathbb S^n\backslash E_j|<j^{-1}$ 
(note that the right-hand side of \eqref{eqn:sup>L} is finite because 
$\eta_0 f_0+\cdots+\eta_{j-1} f_{j-1}$ is a polynomial). By induction, this completes the choice of $\eta_j, f_j, N_j$ for all $j$. 

Now let 
$$E=\bigcap_{i=1}^\infty \bigcup_{j\ge i} E_j.$$ 
It is easy to see that $|\mathbb S^n\backslash E|=0$. To complete the proof, we will show
$$\limsup_{N\rightarrow\infty} 
\big|K_N*f(x)\big|
=\infty,\ \forall x\in E.$$
Fix $x\in E$. By the definition of $E$, there are infinitely many $j$'s for which $x\in E_j$. Fix such an index $j_0$. Then we can write 
\begin{align*}
K_N*f(x)
&=K_N*\Big(\sum_{j<j_0} \eta_j f_j\Big)(x)
+\eta_{j_0} K_N*f_{j_0}(x)
+K_N*\Big(\sum_{j>j_0} \eta_j f_j\Big)(x)\\
&=I+II+III.
\end{align*}
By \eqref{eta<eta} and \eqref{etaK<1}, for $N\le N_{j_0}$ we have
\begin{align*}
|III|
&\le \big\|K_N\big\|_{\infty} \sum_{j>j_0} \eta_j \|f_j\|_{L^1(\mathbb S^n)}\\
&\le \Big (\sum_{j>j_0} \eta_j \Big )
\max_{0\le N\le N_{j_0}}
\big\|K_N\big\|_{\infty}\\
&\le 2\eta_{j_0+1} \max_{0\le N\le N_{j_0}}
\big\|K_N\big\|_{\infty}\\
&\le 2.
\end{align*}
Combining this with \eqref{eqn:sup>L}, we find that
\begin{align*}
\max_{0\le N\le N_{j_0}} 
\big|K_N*f(x)\big|
&\ge \max_{0\le N\le N_{j_0}} |II| - \max_{0\le N\le N_{j_0}} |I|-\max_{0\le N\le N_{j_0}} |III|\\
&\ge j_0-2.
\end{align*}
Since $j_0$ can be chosen arbitrarily large, we obtain
$$\sup_{N\ge 0} 
\big|K_N*f(x)\big|=\infty.$$
This completes the proof of Theorem \ref{thm:cesaro-divergence} since $\big|K_N*f(x)\big|<\infty$ for all $N$. 
\end{proof}

It remains to prove Lemma \ref{lem:tilde-K-mu}. Let $r>0$. We will call $\{y_j\}_{j=1}^m\subset\mathbb S^n$ an $r$-separated set if 
$$|y_j-y_{j'}|\ge r\ \text{whenever } j\neq j'.$$
We call $\{y_j\}_{j=1}^m$ a maximal $r$-separated set if it is not strictly contained in another $r$-separated set. 
By successively adding points to a singleton, it is easy to see that maximal $r$-separated sets exist for all $r>0$; moreover, when $0<r\le\pi$, by \eqref{eq:AD-regularity}, the corresponding cardinality $m$ must satisfy 
$$r^{-n}/C\le m\le C r^{-n}$$ 
for some constant $C>1$. 

\begin{lemma}\label{lem:Riemann-sum}
Let $\{y_j\}_{j=1}^m\subset\mathbb S^n$ be a maximal $r$-separated set with $0<r\le\pi$. Then
\begin{equation}\label{eq:log-lower-bound}
\frac{1}{m}\sum_{j=1}^m \frac{1}{|x-y_j|^n}\ge C \log(\pi/r),\ \forall x\in \mathbb S^n.
\end{equation}
\end{lemma}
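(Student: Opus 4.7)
The plan is to compare the average $\frac{1}{m}\sum_{j=1}^m|x-y_j|^{-n}$ with the truncated integral $\int_{|x-y|\ge 3r}|x-y|^{-n}\,dy$, which is of order $\log(\pi/r)$ on $\mathbb S^n$. To implement this, I would first construct a Borel partition $\{A_j\}_{j=1}^m$ of $\mathbb S^n$ with $A_j\subset B(y_j,r)$. Maximality of $\{y_j\}$ forces $\bigcup_j B(y_j,r)=\mathbb S^n$ (if $|y-y_j|\ge r$ for every $j$, then $\{y_j\}\cup\{y\}$ would still be $r$-separated), so one may take $A_j=B(y_j,r)\setminus\bigcup_{k<j}B(y_k,r)$, and \eqref{eq:AD-regularity} gives $|A_j|\le Cr^n$.

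Next, for indices $j$ with $|x-y_j|\ge 2r$ and points $y\in A_j$, the triangle inequality yields
$$|x-y|\ge |x-y_j|-|y-y_j|> |x-y_j|-r\ge \tfrac{1}{2}|x-y_j|,$$
hence $|x-y_j|^{-n}\ge 2^{-n}|x-y|^{-n}$. Integrating this pointwise bound over $A_j$, using $|A_j|\le Cr^n$, and summing over the selected indices while invoking disjointness of the $A_j$, we obtain
$$Cr^n\sum_{j=1}^m|x-y_j|^{-n}\ge \int_F|x-y|^{-n}\,dy,\qquad F:=\bigcup_{j:\,|x-y_j|\ge 2r}A_j.$$
A short check shows $F\supset\{y\in\mathbb S^n:|x-y|\ge 3r\}$: any such $y$ lies in some $A_j\subset B(y_j,r)$, whence $|x-y_j|\ge|x-y|-r\ge 2r$.

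Combined with the cardinality bound $m\le Cr^{-n}$ and the polar-coordinate estimate $\int_{|x-y|\ge 3r}|x-y|^{-n}\,dy\ge C\log(\pi/r)-C'$ (valid for $r\le \pi/6$, via \eqref{eq:AD-regularity}), this yields $\frac{1}{m}\sum_j|x-y_j|^{-n}\ge C\log(\pi/r)$ in that range. For $r\in(\pi/6,\pi]$, the target $\log(\pi/r)$ is bounded while the trivial estimate $\frac{1}{m}\sum_j|x-y_j|^{-n}\ge m^{-1}\pi^{-n}\ge Cr^n$ is bounded below by a positive constant, so \eqref{eq:log-lower-bound} survives after shrinking $C$. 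The only mild obstacle is bookkeeping of constants in this Voronoi-style comparison; conceptually the argument is a Riemann sum estimate.
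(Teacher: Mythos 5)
Your argument is correct: maximality does give the covering $\bigcup_j B(y_j,r)=\mathbb S^n$, the disjointified cells $A_j\subset B(y_j,r)$ with $|A_j|\le Cr^n$ from \eqref{eq:AD-regularity} let you dominate $r^n\sum_j|x-y_j|^{-n}$ from below by $\int_{|x-y|\ge 3r}|x-y|^{-n}\,dy$, and dividing by $m\le Cr^{-n}$ gives the logarithm. Conceptually this is the same Riemann-sum estimate the paper has in mind, but the packaging differs: the paper's (very terse) proof decomposes into $C$-adic annuli \emph{around $x$} with a sufficiently large ratio and counts, via the covering/volume bounds (the ``mass distribution principle''), how many $y_j$ fall at each scale, so that each of the $\sim\log(\pi/r)$ scales contributes $\gtrsim r^{-n}$ to the sum; you instead decompose \emph{around the points $y_j$} (Voronoi-style cells) and compare the sum directly with the truncated integral. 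The two routes use exactly the same inputs (maximality $\Rightarrow$ covering, \eqref{eq:AD-regularity}, $m\asymp r^{-n}$); yours is a complete, self-contained write-up, while the paper's annulus version avoids constructing a partition and localizes the bookkeeping to one scale at a time. Two small points of care in your version: to get the lower bound $\int_{|x-y|\ge 3r}|x-y|^{-n}\,dy\ge C\log(\pi/r)-C'$ from \eqref{eq:AD-regularity} alone you should use annuli of sufficiently large ratio (or the explicit cap measure), since dyadic annuli need not have positive measure under the crude two-sided bound; and for $r$ near your threshold the quantity $C\log(\pi/r)-C'$ may be smaller than a multiple of $\log(\pi/r)$, but this is harmless because on any range where $r$ is bounded below the trivial bound $\frac1m\sum_j|x-y_j|^{-n}\ge\pi^{-n}$ already yields \eqref{eq:log-lower-bound} after shrinking $C$ --- exactly the case-split you describe, just possibly with a smaller threshold than $\pi/6$.
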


\begin{proof}
The proof follows easily from a $C$-adic (with $C>1$ sufficiently large) decomposition around $x$, and the mass distribution principle (for the latter, cf. \cite[Theorem~5.7]{Mattila1995}).
\end{proof}

Let $\{y_j\}_{j=1}^m\subset\mathbb S^n$ satisfy \eqref{eq:log-lower-bound}. By a small perturbation of $\{y_j\}_{j=1}^m$ we may assume that $\{y_j\}_{j=1}^m$ contains no antipodal pairs.

\begin{lemma}\label{lem:point-masses}
Let $\{y_j\}_{j=1}^m\subset\mathbb S^n$ be a set of $m$ distinct points which contains no antipodal pairs. Then the following hold.\\
(i) For almost every $x\in\mathbb S^n$, the numbers 
$$\pi, |x-y_1|, \cdots, |x-y_m|$$ 
are rationally independent;\\
(ii) If moreover $\{y_j\}_{j=1}^m$ satisfies \eqref{eq:log-lower-bound}, then, with $\mu=\frac{1}{m} \sum_{j=1}^m \delta_{y_j}$, we have $$\limsup_{N\rightarrow\infty} 
\big|\widetilde K_N^{(0)}*\mu(x)\big|\ge C \log(\pi/r) - \widetilde C,\ \text{a.e. }x\in\mathbb S^n,$$
where $\widetilde C>0$ is a constant depending only on $n$. 
\end{lemma}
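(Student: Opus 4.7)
The plan is to prove (i) by a real analytic non-vanishing argument, and then to derive (ii) by combining (i) with Weyl's equidistribution theorem and the kernel asymptotics from Lemmas \ref{lem:jacobi} and \ref{lem:jacobi-poly-asympt}. The main obstacle is the non-vanishing step in (i); once it is settled, (ii) reduces to careful bookkeeping with Weyl's theorem and the asymptotics.

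For part (i), I would fix a nonzero tuple $(a_0,\dots,a_m)\in\mathbb Z^{m+1}$ and study $F(x)=a_0\pi+\sum_{j=1}^m a_j|x-y_j|$. Since the cut locus of any $y\in\mathbb S^n$ is the singleton $\{\hat y\}$, the geodesic distance $x\mapsto|x-y|$ is real analytic on $\mathbb S^n\setminus\{y,\hat y\}$, so $F$ is real analytic on the connected open set $U:=\mathbb S^n\setminus\{y_j,\hat y_j\}_{j=1}^m$. It suffices to show $F\not\equiv 0$: if $a_j=0$ for all $j\ge 1$, then $F\equiv a_0\pi\ne 0$; otherwise, pick $j_0\ge 1$ with $a_{j_0}\ne 0$ and observe that, by distinctness and the no-antipodal-pair hypothesis, $y_{j_0}\notin\{y_j,\hat y_j\}$ for every $j\ne j_0$, so each $|x-y_j|$ with $j\ne j_0$ is smooth near $y_{j_0}$, whereas $|x-y_{j_0}|=|u|$ in geodesic normal coordinates $u$ at $y_{j_0}$ fails to be differentiable at $u=0$. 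Thus $F$ is non-smooth at $y_{j_0}$, so $F\not\equiv 0$. The zero set of a nontrivial real analytic function on $U$ has Lebesgue measure zero, and a countable union over nonzero integer tuples yields (i). The no-antipodal-pair hypothesis is essential here, as the identity $|x-y|+|x-\hat y|=\pi$ would otherwise be an outright violation of rational independence.

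For part (ii), I would fix $x$ in the full-measure set from (i) (further excluding the finite set $\{y_j,\hat y_j\}_{j=1}^m$). By Lemma \ref{lem:jacobi-poly-asympt} combined with \eqref{eq:C_N-asympt}, for each $j$ with $|x-y_j|\le\pi/2$,
$$\widetilde K_N^{(0)}(x,y_j)=\gamma\,k(x,y_j)\cos\!\left((N+\tfrac{3n-1}{4})|x-y_j|-\tfrac{n\pi}{2}\right)+O(N^{-1}),$$
for a positive dimensional constant $\gamma$, with $O$-constant uniform in $j=1,\dots,m$ (since $|x-y_j|$ is bounded away from $0$ and $\pi$). Setting $\alpha_j:=|x-y_j|/(2\pi)$, any relation $\sum_{j=1}^m k_j\alpha_j=k_0\in\mathbb Z$ with $(k_1,\dots,k_m)\in\mathbb Z^m\setminus\{0\}$ rewrites as $-2\pi k_0+\sum_j k_j|x-y_j|=0$, which is forbidden by the rational independence from (i). Weyl's equidistribution theorem then gives that $(N\alpha_1,\dots,N\alpha_m)\bmod 1$ is equidistributed in $[0,1)^m$, so I can extract $N_k\to\infty$ making all $m$ cosines above tend to $1$ uniformly in $j$. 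Using the bound $k(x,y_j)\ge c|x-y_j|^{-n}$ on $\{|x-y_j|\le\pi/2\}$ (since $\sin(|x-\hat y_j|/2)\ge\sin(\pi/4)$ there), and then adding back the at most $m$ discarded terms with $|x-y_j|>\pi/2$ (each $\le(\pi/2)^{-n}$), one arrives at
$$\limsup_{N\to\infty}\big|\widetilde K_N^{(0)}*\mu(x)\big|\ge \gamma c\cdot \frac{1}{m}\sum_{j=1}^m|x-y_j|^{-n}-\gamma c\,(\pi/2)^{-n}\ge C\log(\pi/r)-\widetilde C$$
by Lemma \ref{lem:Riemann-sum}.
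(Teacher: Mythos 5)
Your proposal is correct and follows essentially the same route as the paper: part (i) via real-analyticity of the distance functions off the points and their antipodes plus non-differentiability at $y_{j_0}$ to rule out the identically-zero case, and part (ii) via the kernel asymptotics together with simultaneous alignment of the cosines, lower-bounding $k(x,y_j)$ by $c|x-y_j|^{-n}$ and invoking Lemma \ref{lem:Riemann-sum}. The only cosmetic difference is that you cite Weyl's equidistribution theorem where the paper uses Kronecker's theorem (and you settle for the lower bound rather than the exact value of the limsup), which changes nothing of substance.
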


\begin{proof}
(i) It suffices to show that, for any $q_0, q_1, \cdots, q_m\in\mathbb Q$ not all of which are zero, letting
$$F(x)=q_0 \pi + \sum_{j=1}^m q_j |x-y_j|,$$ 
the zero set 
$$Z=\left \{x\in\mathbb S^n: F(x)=0\right \}$$
has measure zero. Without loss of generality, we may assume that $q_1, \cdots, q_m$ are not all equal to zero, since otherwise $F(x)\equiv q_0 \pi\neq 0$. Notice that $F(x)$ is real-analytic on 
$$U=\mathbb S^n\backslash\big (\{y_j\}_{j=1}^m\cup\{\hat y_j\}_{j=1}^m\big),$$
and is not constantly zero on $U$ (since $F(x)$ is nondifferentiable at the $y_j$'s for which $q_j\neq 0$). It follows that $Z$ must have measure zero (cf. \cite[Chapter VII, Lemma~4.17]{SteinWeiss1971}; see also \cite{Mityagin2015}).

(ii) Note that
$$\widetilde K_N^{(0)}*\mu(x)=\frac{1}{m} \sum_{j=1}^m \widetilde K_N^{(0)}(x,y_j).$$
By \eqref{eq:tilde-K-0}, \eqref{eq:C_N-asympt}, and Lemma \ref{lem:jacobi-poly-asympt}, for $x\in U$ we have
\begin{align*}
\widetilde K_N^{(0)}*\mu(x)
&=\frac{\theta_N}{m} \sum_{j=1}^m k^{(0)}(x,y_j)
\cos\left (\Big(N+\frac{3n-1}{4}\Big)|x-y_j|-\frac{n\pi}{2}\right ) + O(N^{-1}), 
\end{align*}
where $\theta_N={C_N}{N^{-1/2}}$ satisfies \eqref{eq:C_N-asympt}, and 
$$k^{(0)}(x,y)=\mathds 1_{\{|x-y|\le \pi/2\}}
k(x,y)$$
(with $k(x,y)$ given by \eqref{eq:k(x,y)}). By (i), for almost every $x\in U$, the numbers $2\pi, |x-y_1|, \cdots, |x-y_m|$ are  rationally independent. It follows by Kronecker's
theorem (cf. \cite[Chapter~23]{HardyWright2008}) that, for such $x$, 
$$\limsup_{N\rightarrow\infty} 
\big|\widetilde K_N^{(0)}*\mu(x)\big|=\frac{C}{m} \sum_{j=1}^m k^{(0)}(x,y_j).$$
On the other hand, by \eqref{eq:k(x,y)},
\begin{align*}
\frac{C}{m} \sum_{j=1}^m k^{(0)}(x,y_j)
& \ge \frac{C}{m} \sum_{j=1}^m \mathds 1_{\{|x-y_j|\le \pi/2\}}\frac{1}{|x-y_j|^n}\\
& = \frac{C}{m} \sum_{j=1}^m \frac{1}{|x-y_j|^n}
-\frac{C}{m} \sum_{j=1}^m \mathds 1_{\{|x-y_j|> \pi/2\}}\frac{1}{|x-y_j|^n}\\
& \ge \frac{C}{m} \sum_{j=1}^m \frac{1}{|x-y_j|^n}
-\widetilde C.
\end{align*}
Combining this with the assumption that $\{y_j\}_{j=1}^m$ satisfies \eqref{eq:log-lower-bound}, the proof of (ii) is complete.
\end{proof}

\begin{proof}[Proof of Lemma \ref{lem:tilde-K-mu}]	
Lemma \ref{lem:tilde-K-mu} now follows immediately by taking 
$$\mu=\frac{1}{m} \sum_{j=1}^m \delta_{y_j},$$
where $\{y_j\}_{j=1}^m$ is as in Lemma \ref{lem:point-masses}\,(ii), with $r>0$ chosen small enough that 
$$C \log(\pi/r) - \widetilde C> L.$$
\end{proof}

\section{Equidivergence of summation methods}\label{sec:summation}

In this section, we use Theorem \ref{thm:cesaro-divergence} to obtain almost everywhere divergence results for Riesz and Bochner-Riesz means. 

For $\delta>0$, the Riesz $(R,\delta)$ means of $f\in L^1(\mathbb S^n)$ are defined by
\begin{align}
\widetilde S^\delta_R f 
= \sum_{k<R} \left (1-\frac{k}{R}\right )^\delta \text{proj}_k f,\ R>0.\label{eq:riesz-means-def}
\end{align}
Equivalently,
\begin{equation}\label{eq:riesz-means-def-2}
\widetilde S^\delta_R f 
= \frac{1}{R^\delta}\sum_{k<R} \left (R-{k}\right )^\delta \text{proj}_k f.
\end{equation}

We first show that Theorem \ref{thm:cesaro-divergence} implies the following. 

\begin{corollary}\label{thm:riesz-divergence}
There exists a function $f\in L^1(\mathbb S^n)$ such that
$$\limsup_{R\rightarrow\infty}\big|\widetilde S_R^{\frac{n-1}{2}} f(x)\big|=\infty,\ \text{a.e. } x\in\mathbb S^n.$$
\end{corollary}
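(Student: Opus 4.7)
The plan is to deduce the corollary from Theorem \ref{thm:cesaro-divergence} via an a.e.\ equidivergence between Cesàro $(C, \delta_0)$ and Riesz $(R, \delta_0)$ means at the critical index. Concretely, I would prove that for every $f \in L^1(\mathbb S^n)$,
\[
\sup_{N \geq 0} \bigl|S_N^{\delta_0} f(x)\bigr| = \infty \iff \sup_{R > 0} \bigl|\widetilde{S}_R^{\delta_0} f(x)\bigr| = \infty, \quad \text{a.e. } x \in \mathbb S^n.
\]
Granting this equivalence, the corollary follows immediately by taking the $f$ produced by Theorem \ref{thm:cesaro-divergence}, for which $\limsup_N |S_N^{\delta_0} f(x)| = \infty$ a.e.

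To establish the equivalence I would fix $x$, view $\sum_k \operatorname{proj}_k f(x)$ as a scalar series, and invoke Ingham's theorem \cite{Ingham1968/1969}, which shows that the $(C,\delta)$ and $(R,\delta)$ summability means of a numerical series of polynomial growth are comparable in supremum norm. Since the Cesàro weights $A_{N-k}^{\delta_0}/A_N^{\delta_0}$ and the Riesz weights $(1-k/N)^{\delta_0}$ agree asymptotically by \eqref{eq:A-delta-k}, the comparison should reduce to controlled tail estimates. The polynomial-growth hypothesis on $|\operatorname{proj}_k f(x)|$ holds at a.e.\ $x$ by the zonal kernel bound \eqref{eq:Z-bound}:
\[
|\operatorname{proj}_k f(x)| \le C(k+1)^{\delta_0} \int_{\mathbb S^n} |x-y|^{-\delta_0} |x-\hat y|^{-\delta_0}|f(y)|\,dy,
\]
and the integral is finite for a.e.\ $x$ by Fubini, since the singular weights are integrable in $x$.

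The main obstacle is carrying out Ingham's comparison at the precise critical index $\delta_0$ in a quantitative form that transfers divergence, namely an inequality such as
\[
\sup_{N \ge 0} \bigl|S_N^{\delta_0} f(x)\bigr| \le C \sup_{R > 0} \bigl|\widetilde{S}_R^{\delta_0} f(x)\bigr| + C'(x)
\]
and its reverse, with $C'(x) < \infty$ a.e. The weak-type $(1,1)$ inequality \eqref{eq:weak-1-1} for the Hardy-Littlewood maximal function, already central to Section \ref{sec:divergence}, should again be the key tool for showing that such error terms are almost-everywhere finite, echoing the role of Lemma \ref{lem:decompose} in the Cesàro case. A similar equivalence argument, combined with appropriate Abel-type summation, should also handle the Bochner-Riesz means promised in Section \ref{sec:summation}.
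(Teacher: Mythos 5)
Your proposal follows essentially the same route as the paper: deduce the corollary from Theorem \ref{thm:cesaro-divergence} by Ingham's comparison of Ces\`aro and Riesz means (the paper's Lemma \ref{lem:Ingham-B} yielding Lemma \ref{lem:riesz-cesaro}), with the error term $\sup_k |\mathrm{proj}_k f(x)|/(k+1)^{(n-1)/2}$ shown finite a.e.\ exactly as you indicate, via the zonal bound \eqref{eq:Z-bound} and Fubini (Lemma \ref{lem:maximal-projection}). The only cosmetic differences are that you propose a two-sided equivalence when only the direction ``Ces\`aro divergence implies Riesz divergence'' is needed here, and the Hardy--Littlewood weak $(1,1)$ bound you mention is not actually required for this step.
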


To prove Corollary \ref{thm:riesz-divergence}, we will use the following result of Ingham \cite{Ingham1968/1969}.

\begin{lemma}[Theorem B of \cite{Ingham1968/1969}, a special case]\label{lem:Ingham-B}
Let $\delta>0$ and let $m=\lceil \delta\rceil+2$. There exist constants $c_1,\cdots,c_m$, such that
$$\binom{k+\delta}{k}=\sum_{j=1}^m c_j \left (k+\frac{j}{m}\right )^{\delta} + O\big((k+1)^{-2}\big),\ k=0,1,\cdots.$$
\end{lemma}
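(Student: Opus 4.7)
The plan is to match asymptotic expansions of both sides in decreasing powers of $k$ and then solve a small linear system for the constants $c_1,\ldots,c_m$.

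First I would expand the right-hand side. For $k\ge 2$ and $j=1,\ldots,m$, the binomial series (i.e., the Taylor expansion of $(1+x)^\delta$ about $x=0$) gives
\[
\left(k+\frac{j}{m}\right)^\delta = k^\delta\sum_{\ell=0}^{m-1} \binom{\delta}{\ell} \left(\frac{j}{mk}\right)^\ell + O(k^{\delta-m}),
\]
with an implicit constant depending only on $\delta$ and $m$. Since $m=\lceil\delta\rceil+2\ge \delta+2$, the remainder is $O(k^{-2})$. Summing,
\[
\sum_{j=1}^m c_j\left(k+\frac{j}{m}\right)^\delta = \sum_{\ell=0}^{m-1} \binom{\delta}{\ell}\, m^{-\ell}\left(\sum_{j=1}^m c_j\, j^\ell\right) k^{\delta-\ell} + O(k^{-2}).
\]

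Next I would expand the left-hand side. The classical asymptotic expansion for the ratio of Gamma functions (obtained from Stirling's series; see e.g.\ Tricomi--Erd\'elyi) yields
\[
\binom{k+\delta}{k} = \frac{\Gamma(k+\delta+1)}{\Gamma(k+1)\Gamma(\delta+1)} = \sum_{\ell=0}^{m-1} B_\ell\, k^{\delta-\ell} + O(k^{\delta-m}),
\]
with computable constants $B_\ell$ (in particular $B_0=1/\Gamma(\delta+1)$), and the error is again $O(k^{-2})$.

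The key remaining step is to choose the $c_j$'s so that the coefficients of $k^{\delta-\ell}$ match for each $\ell=0,1,\ldots,m-1$. When $\binom{\delta}{\ell}\ne 0$, the resulting equation prescribes the power sum $\sum_{j=1}^m c_j\, j^\ell$, and the coefficient matrix $(j^\ell)_{\ell=0,\ldots,m-1;\,j=1,\ldots,m}$ is the transpose of a Vandermonde matrix in the distinct nodes $1,2,\ldots,m$, hence invertible. When $\binom{\delta}{\ell}=0$---which can occur only if $\delta$ is a non-negative integer with $\ell>\delta$, in which case $\binom{k+\delta}{k}$ is a polynomial in $k$ of degree $\delta$ and so $B_\ell=0$ automatically---the equation is vacuous. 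In either case, admissible constants $c_1,\ldots,c_m$ exist. The bound for the finitely many small values $k=0,1$ is then trivial and can be absorbed into the implicit constant, giving the claim for all $k\ge 0$ in the form $O((k+1)^{-2})$.

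The main obstacle, such as it is, is securing the Gamma-ratio asymptotic expansion with a quantitative $O(k^{\delta-m})$ remainder; once this standard fact is in hand, the proof reduces to the Vandermonde-type linear algebra above.
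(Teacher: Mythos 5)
Your proposal is correct. Note that the paper itself gives no proof of this lemma: it is quoted verbatim as a special case of Theorem B of Ingham's equivalence paper, so there is no internal argument to compare against; what you supply is a self-contained elementary proof, which is in the same spirit as Ingham's original method (asymptotic expansion plus a finite linear system) but stands on its own. The three ingredients all check out: the Tricomi--Erd\'elyi expansion of the Gamma ratio gives $\binom{k+\delta}{k}=\sum_{\ell=0}^{m-1}B_\ell k^{\delta-\ell}+O(k^{\delta-m})$ with $B_0=1/\Gamma(\delta+1)$; the binomial series gives the analogous expansion of each $(k+j/m)^\delta$ with uniform error since $j/m\le 1$; and because $m=\lceil\delta\rceil+2\ge\delta+2$, both remainders are $O(k^{-2})$, so it suffices to match the coefficients of $k^{\delta-\ell}$ for $\ell=0,\dots,m-1$, which your transposed Vandermonde system in the distinct nodes $1,\dots,m$ accomplishes. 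Your handling of the degenerate case is also right: $\binom{\delta}{\ell}=0$ forces $\delta\in\mathbb Z_{>0}$ and $\ell>\delta$, and then $\binom{k+\delta}{k}$ is a polynomial of degree $\delta$ in $k$, so $B_\ell=0$ (by uniqueness of the coefficients in such an expansion) and the corresponding equations are vacuous, leaving an underdetermined but solvable system --- indeed in that case the identity can even be made exact, since $m\ge\delta+2$ translates $(k+j/m)^\delta$, $j=1,\dots,\delta+1$, into a basis of the polynomials of degree $\le\delta$. The only mild caveat is that you import the Gamma-ratio expansion with quantitative remainder as a black box, but this is a classical fact and citing it is entirely reasonable; the finitely many values $k=0,1$ are absorbed into the constant exactly as you say.
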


Note that Lemma \ref{lem:Ingham-B} implies
\begin{align*}
S_N^\delta f
=& \sum_{j=1}^m c_j \frac{(N+\frac{j}{m})^\delta}{A_N^\delta} \widetilde S^\delta_{N+\frac{j}{m}} f 
+\frac{1}{A_N^\delta} \sum_{k=0}^N O\big((N-k+1)^{-2}\big) \text{proj}_k f \\
=& O(1)\max_{1\le j\le m} \big|\widetilde S^\delta_{N+\frac{j}{m}} f\big| +O(1)\max_{k\le N} \frac{|\text{proj}_k f|}{(k+1)^\delta}.
\end{align*}
In particular, taking $\delta=\delta_0$, we obtain the following relation between Ces\`aro and Riesz means. 

\begin{lemma}\label{lem:riesz-cesaro}
There exists a constant $C>0$, such that for any $f\in L^1(\mathbb S^n)$, 
$$\limsup_{N\rightarrow\infty} \big|S_N^{\frac{n-1}{2}} f(x)\big| \le C \limsup_{R\rightarrow\infty} \big|\widetilde S_R^{\frac{n-1}{2}} f(x)\big| + C \sup_{k\ge 0}\frac{|\emph{proj}_k f(x)|}{(k+1)^{\frac{n-1}{2}}},\ \forall x\in\mathbb S^n.$$
\end{lemma}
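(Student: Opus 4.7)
The proof is essentially laid out in the remark immediately preceding the statement, so my plan is to formalize that computation and then pass to the limit.

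First, I would apply Lemma \ref{lem:Ingham-B} with $\delta=\delta_0=\tfrac{n-1}{2}$ and $m=\lceil\delta_0\rceil+2$ to the binomial coefficient $A_{N-k}^{\delta_0}=\binom{N-k+\delta_0}{N-k}$, obtaining
$$A_{N-k}^{\delta_0}=\sum_{j=1}^m c_j\Bigl(N-k+\tfrac{j}{m}\Bigr)^{\delta_0}+O\bigl((N-k+1)^{-2}\bigr).$$
Substituting into \eqref{eq:cesaro-mean-def} and swapping summations decomposes $S_N^{\delta_0}f$ into a main term and an error term. For the main term, I would verify that for each $j=1,\dots,m$ the inner sum exactly matches the Riesz partial sum in \eqref{eq:riesz-means-def-2} at $R=N+\tfrac{j}{m}$: the index range $k<N+\tfrac{j}{m}$ coincides with $0\le k\le N$ because $0<\tfrac{j}{m}\le 1$, so
$$\sum_{k=0}^N\bigl(N-k+\tfrac{j}{m}\bigr)^{\delta_0}\mathrm{proj}_k f=\bigl(N+\tfrac{j}{m}\bigr)^{\delta_0}\,\widetilde S^{\delta_0}_{N+j/m}f.$$
Combined with $A_N^{\delta_0}\sim N^{\delta_0}/\Gamma(\delta_0+1)$, the ratios $(N+j/m)^{\delta_0}/A_N^{\delta_0}$ are uniformly $O(1)$, which handles the main term.

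For the error term, I would bound $|\mathrm{proj}_k f(x)|\le (k+1)^{\delta_0}\sup_{k\ge 0}\tfrac{|\mathrm{proj}_k f(x)|}{(k+1)^{\delta_0}}$ pointwise and then estimate
$$\frac{1}{A_N^{\delta_0}}\sum_{k=0}^N (N-k+1)^{-2}(k+1)^{\delta_0}\le\frac{C(N+1)^{\delta_0}}{A_N^{\delta_0}}\sum_{k=0}^N(N-k+1)^{-2}\le C,$$
using $(k+1)^{\delta_0}\le(N+1)^{\delta_0}$ and the convergence of $\sum(N-k+1)^{-2}$. This shows the error term is bounded by $C\sup_{k\ge 0}\tfrac{|\mathrm{proj}_k f(x)|}{(k+1)^{\delta_0}}$ pointwise.

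Putting the two estimates together yields, for every $N$ and $x$,
$$\bigl|S_N^{\delta_0}f(x)\bigr|\le C\max_{1\le j\le m}\bigl|\widetilde S^{\delta_0}_{N+j/m}f(x)\bigr|+C\sup_{k\ge 0}\tfrac{|\mathrm{proj}_k f(x)|}{(k+1)^{\delta_0}}.$$
Taking $\limsup_{N\to\infty}$ (and noting that as $N\to\infty$ the values $N+\tfrac{j}{m}$ range through an unbounded subset of $\mathbb R_{>0}$, so the maximum is controlled by $\limsup_{R\to\infty}|\widetilde S_R^{\delta_0}f(x)|$) gives the claim. The only part requiring any care is the error-term bound; the rest is bookkeeping. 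The second term on the right is the price paid for turning an exact Cesàro sum into a finite combination of Riesz sums via Ingham's asymptotic expansion.
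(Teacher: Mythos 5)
Your argument is exactly the computation the paper carries out in the remark preceding the lemma: expand $A_{N-k}^{\delta_0}$ via Ingham's Lemma~\ref{lem:Ingham-B}, identify the main terms as the Riesz means $\widetilde S^{\delta_0}_{N+j/m}f$ with uniformly bounded coefficients, and absorb the $O((N-k+1)^{-2})$ error into $\sup_{k\ge 0}|\mathrm{proj}_k f|/(k+1)^{\delta_0}$ before taking $\limsup_{N\to\infty}$. The proposal is correct and takes essentially the same route, with your error-term estimate filling in the small bookkeeping the paper leaves implicit.
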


By Lemma \ref{lem:riesz-cesaro}, it follows that
$$\limsup_{R\rightarrow\infty}\big|\widetilde S_R^{\frac{n-1}{2}} f(x)\big|=\infty$$
provided
$$\limsup_{N\rightarrow\infty}\big|S_N^{\frac{n-1}{2}} f(x)\big|=\infty
\quad\text{and}\quad\sup_{k\ge 0}\frac{|\text{proj}_k f(x)|}{(k+1)^{\frac{n-1}{2}}}<\infty.$$
Thus, in view of Theorem \ref{thm:cesaro-divergence}, to prove Corollary \ref{thm:riesz-divergence} it remains to show the following. 

\begin{lemma}\label{lem:maximal-projection}
There exists a constant $C>0$, such that for any $f\in L^1(\mathbb S^n)$, 
\begin{equation}\label{eq:proj-L1-bound}
\left \|\sup_{k\ge 0}\frac{|\emph{proj}_k f|}{(k+1)^{\frac{n-1}{2}}}\right \|_{L^1(\mathbb S^n)} \le C \|f\|_{L^1(\mathbb S^n)}.
\end{equation}
\end{lemma}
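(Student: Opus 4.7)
The plan is to bypass the sup entirely by noting that the bound on $|\text{proj}_k f|/(k+1)^{(n-1)/2}$ coming from \eqref{eq:Z-bound} is already independent of $k$. From \eqref{eq:proj-zonal} and \eqref{eq:Z-bound},
$$\frac{|\text{proj}_k f(x)|}{(k+1)^{\frac{n-1}{2}}} \le C \int_{\mathbb S^n} |x-y|^{-\frac{n-1}{2}} |x-\hat y|^{-\frac{n-1}{2}} |f(y)|\,dy =: T|f|(x),$$
and the right-hand side does not depend on $k$. Hence
$$\sup_{k\ge 0}\frac{|\text{proj}_k f(x)|}{(k+1)^{\frac{n-1}{2}}} \le T|f|(x),$$
and it suffices to show that the positive integral operator $T$ is bounded on $L^1(\mathbb S^n)$.

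By Fubini's theorem,
$$\|Tf\|_{L^1(\mathbb S^n)} = \int_{\mathbb S^n} |f(y)| \left(\int_{\mathbb S^n} |x-y|^{-\frac{n-1}{2}} |x-\hat y|^{-\frac{n-1}{2}}\,dx\right)dy,$$
so the problem reduces to a uniform bound in $y$ on the inner integral. I would split the region of integration according to whether $|x-y|\le \pi/2$ or $|x-\hat y|\le \pi/2$ (these cover $\mathbb S^n$). On $\{|x-y|\le \pi/2\}$ one has $|x-\hat y|\ge \pi/2$, so the second factor is bounded by a constant; by \eqref{eq:AD-regularity} and polar coordinates around $y$,
$$\int_{|x-y|\le \pi/2} |x-y|^{-\frac{n-1}{2}}\,dx \le C\int_0^{\pi/2} r^{-\frac{n-1}{2}}\, r^{n-1}\,dr <\infty$$
because $(n-1)/2<n$. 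The region $\{|x-\hat y|\le \pi/2\}$ is handled symmetrically by interchanging the roles of $y$ and $\hat y$. This yields a bound uniform in $y$ and completes the argument.

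There is no serious obstacle: the proof is essentially a direct consequence of the pointwise zonal kernel estimate \eqref{eq:Z-bound}. The only point to verify carefully is the uniform integrability of the kernel $|x-y|^{-(n-1)/2}|x-\hat y|^{-(n-1)/2}$, which hinges on the mild exponent $(n-1)/2<n$ at each of the two singularities $y$ and $\hat y$. No maximal-function machinery or interpolation is needed, since the supremum over $k$ is trivial after the pointwise estimate.
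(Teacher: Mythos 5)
Your proposal is correct and follows essentially the same route as the paper: bound $|\mathrm{proj}_k f|/(k+1)^{\frac{n-1}{2}}$ by the $k$-independent convolution with the kernel $|x-y|^{-\frac{n-1}{2}}|x-\hat y|^{-\frac{n-1}{2}}$ via \eqref{eq:Z-bound}, then conclude by Fubini and the uniform integrability of that kernel. The only difference is that you spell out the integrability computation (splitting near $y$ and $\hat y$), which the paper simply asserts.
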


\begin{proof}
By \eqref{eq:Z-bound}, we have
$$\frac{|\text{proj}_k f|}{(k+1)^{\frac{n-1}{2}}}
\le C k^{(0,\pi)}*|f|,\ k=0,1,\cdots,$$
where
$$k^{(0,\pi)}(x,y)= |x-y|^{-\frac{n-1}{2}} |x-\hat y|^{-\frac{n-1}{2}}.$$
Since
$$\int_{\mathbb S^n} k^{(0,\pi)}(x,y)dx \equiv C<\infty,$$
\eqref{eq:proj-L1-bound} follows immediately from Fubini's theorem.
\end{proof}

Let $c\in\mathbb R$. We define the shifted Riesz $(R,\delta)$ means by
\begin{equation}\label{eq:shifted-riesz-def}
\widetilde S^{\delta,c}_R f 
= \sum_{k+c<R} \left (1-\frac{k+c}{R}\right )^\delta \text{proj}_k f,\ R>0.
\end{equation}
Note that, when $R>c$, we can write
\begin{align}
\widetilde S^{\delta,c}_R f 
&= \left (1-\frac{c}{R}\right )^\delta \sum_{k<R-c} \left (1-\frac{k}{R-c}\right )^\delta \text{proj}_k f \notag\\
&= \left (1-\frac{c}{R}\right )^\delta \widetilde S^{\delta}_{R-c} f.\label{eq:shifted-identity}
\end{align}
Since $\left (1-\frac{c}{R}\right )^\delta\rightarrow1$ as $R\rightarrow\infty$, Corollary \ref{thm:riesz-divergence} implies the following.

\begin{corollary}\label{thm:riesz-divergence-shifted}
For any $c\in\mathbb R$, there exists a function $f\in L^1(\mathbb S^n)$ such that
$$\limsup_{R\rightarrow\infty}\big|\widetilde S_R^{\frac{n-1}{2},c} f(x)\big|=\infty,\ \text{a.e. } x\in\mathbb S^n.$$
\end{corollary}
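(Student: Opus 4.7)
The plan is to deduce this directly from Corollary \ref{thm:riesz-divergence} by invoking the algebraic identity \eqref{eq:shifted-identity} that was already established in the passage immediately preceding the statement. Specifically, I would take $f\in L^1(\mathbb S^n)$ to be exactly the function supplied by Corollary \ref{thm:riesz-divergence}, so that
$$\limsup_{R\to\infty}\bigl|\widetilde S_R^{\frac{n-1}{2}} f(x)\bigr|=\infty\quad\text{for a.e. }x\in\mathbb S^n,$$
and show that this same $f$ works for the shifted means.

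The key observation is that, for all $R$ sufficiently large (certainly for $R>|c|$), the factor $(1-c/R)^{(n-1)/2}$ in \eqref{eq:shifted-identity} is well-defined, positive, and tends to $1$ as $R\to\infty$. Hence, for a.e.\ $x$, choosing any sequence $R_k\to\infty$ along which $|\widetilde S_{R_k}^{\frac{n-1}{2}} f(x)|\to\infty$ (which exists by the above), the sequence $R_k':=R_k+c$ also tends to $\infty$, and
$$\bigl|\widetilde S_{R_k'}^{\frac{n-1}{2},c} f(x)\bigr|=\Bigl(1-\tfrac{c}{R_k'}\Bigr)^{\frac{n-1}{2}}\bigl|\widetilde S_{R_k}^{\frac{n-1}{2}} f(x)\bigr|\longrightarrow\infty.$$
Taking $\limsup$ over $R\to\infty$ then yields $\infty$ at such $x$, and since the exceptional set is null, the corollary follows.

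There is no real obstacle: the argument is essentially a change of variable $R\mapsto R-c$ in the $\limsup$, justified by the identity \eqref{eq:shifted-identity} and the fact that the prefactor $(1-c/R)^{(n-1)/2}$ has limit $1$. The only minor point to verify explicitly is that when $c>0$ the identity requires $R>c$, which is harmless since $\limsup_{R\to\infty}$ only sees the tail $R\to\infty$.
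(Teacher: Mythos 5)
Your argument is correct and is essentially the paper's own proof: the paper deduces the shifted case from Corollary \ref{thm:riesz-divergence} in one line via the identity \eqref{eq:shifted-identity} and the fact that $\bigl(1-\tfrac{c}{R}\bigr)^{\frac{n-1}{2}}\to 1$, which is exactly the change of variable $R\mapsto R-c$ you spell out. Your extra care about $R>|c|$ and the choice of a divergent sequence $R_k$ is fine but not needed beyond what the paper implicitly uses.
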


To handle Bochner-Riesz means, we will need another result of Ingham from \cite{Ingham1968/1969}.

\begin{lemma}[Theorem A of \cite{Ingham1968/1969}, a special case]\label{lem:Ingham-A}
Let $\delta>0$ and let $m=\lceil \delta\rceil+1$. There exist polynomials $p_0,\cdots,p_m$, such that 
$$(k+\varepsilon)^\delta 
= \sum_{j=0}^{m} p_j(\varepsilon) A^\delta_{k-j} 
+ O\big((k+1)^{-2}\big),\ 0<\varepsilon\le 1,\ k=0,1,\cdots,$$ 
where $A^\delta_{k}:=0$ if $k<0$.
\end{lemma}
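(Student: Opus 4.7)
The plan is to match the asymptotic expansions of the two sides as $k\to\infty$, using that $m=\lceil\delta\rceil+1\ge \delta+1$ guarantees that a remainder of order $k^{\delta-m-1}$ is $O(k^{-2})$.

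First I would expand the left-hand side by the binomial theorem:
\[
(k+\varepsilon)^\delta = \sum_{u=0}^m \binom{\delta}{u}\varepsilon^u\, k^{\delta-u} + O(k^{-2}),
\]
uniformly for $\varepsilon\in(0,1]$. Next, I would expand each $A^\delta_{k-j}$ by combining the standard Stirling expansion of $\Gamma(\ell+\delta+1)/\Gamma(\ell+1)$ in negative powers of $\ell$ with the Taylor expansion $(k-j)^{\delta-s}=k^{\delta-s}(1-j/k)^{\delta-s}$, which yields
\[
\Gamma(\delta+1)\,A^\delta_{k-j} = \sum_{u=0}^m \gamma_{u,j}\, k^{\delta-u} + O(k^{-2}),
\]
where $\gamma_{u,j}$ is a polynomial in $j$ of degree at most $u$ whose coefficient of $j^u$ equals $(-1)^u\binom{\delta}{u}$.

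Substituting these expansions into the desired identity and matching the coefficient of $k^{\delta-u}$ for each $u\in\{0,1,\ldots,m\}$ leads to the square linear system
\[
\sum_{j=0}^m \gamma_{u,j}\,p_j(\varepsilon) = \Gamma(\delta+1)\binom{\delta}{u}\varepsilon^u, \qquad u=0,1,\ldots,m.
\]
When $\delta$ is not a nonnegative integer, every $\binom{\delta}{u}$ is nonzero, so the rows of the matrix $M=(\gamma_{u,j})$ are polynomials in $j$ of distinct degrees $0,1,\ldots,m$; these are linearly independent as polynomials, and evaluation at the distinct points $j=0,1,\ldots,m$ preserves linear independence by a standard Vandermonde argument. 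Hence $M$ is invertible, and Cramer's rule yields $p_j(\varepsilon)$ as polynomials in $\varepsilon$ of degree at most $m$. When $\delta=n_0$ is a nonnegative integer, both sides of the target identity are polynomials in $k$ of degree $n_0$; the rows $\gamma_{u,\cdot}$ and the right-hand sides $\binom{\delta}{u}\varepsilon^u$ vanish identically for $u>n_0$, reducing matters to an $(n_0+1)\times(n_0+1)$ subsystem which is solved by the same argument with $p_{n_0+1}\equiv 0$.

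The main obstacle is the invertibility of $M$: one must first extract the polynomial-in-$j$ structure of $\gamma_{u,j}$ from the Stirling expansion, and then handle separately the integer case where some $\binom{\delta}{u}$ vanish. Once the identity holds asymptotically as $k\to\infty$, any bounded discrepancy at the finitely many small values of $k$ is absorbed into the $O((k+1)^{-2})$ error term by enlarging the implied constant.
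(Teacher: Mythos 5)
Your argument is correct, but note that the paper does not prove this lemma at all: it is quoted verbatim as a special case of Ingham's Theorem A and used as a black box, so what you have written is a self-contained substitute for the citation (and is close in spirit to Ingham's own equivalence-theorem argument). The key points all check out. Writing $\Gamma(\delta+1)A^\delta_\ell=\ell^\delta\bigl(b_0+b_1\ell^{-1}+\cdots+b_m\ell^{-m}+O(\ell^{-m-1})\bigr)$ with $b_0=1$ and substituting $\ell=k-j$ gives $\gamma_{u,j}=\sum_{t=0}^{u}b_{u-t}\binom{\delta-u+t}{t}(-1)^t j^t$, which is indeed a polynomial in $j$ of degree at most $u$ with $j^u$-coefficient $(-1)^u\binom{\delta}{u}$; your invertibility argument can be phrased as $M=CV$ with $C$ lower triangular with nonzero diagonal (the leading coefficients) and $V$ the Vandermonde matrix at the nodes $0,1,\dots,m$, so for nonintegral $\delta$ the square system is uniquely solvable and Cramer's rule makes each $p_j$ a polynomial in $\varepsilon$ of degree at most $m$. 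The integer case is also handled correctly, since then $\Gamma(\delta+1)A^\delta_{k-j}$ is exactly a polynomial of degree $\delta$ in $k$, so the row $u=m$ and its right-hand side vanish identically and the reduced system is invertible. The exponent bookkeeping works because $\delta-m-1=\delta-\lceil\delta\rceil-2\le-2$, uniformity in $\varepsilon\in(0,1]$ is automatic because the $p_j$ are bounded there, and your final remark correctly disposes of the finitely many small $k$ (where the intermediate expansions in powers of $k$ should not be invoked, only the boundedness of both sides). What your route buys is independence from Ingham's paper at the cost of invoking the Stirling-type expansion of $\Gamma(\ell+\delta+1)/\Gamma(\ell+1)$; if you wanted to include it in the paper you would only need to write out the two displayed expansions and the triangular-times-Vandermonde factorization explicitly.
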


Lemma \ref{lem:Ingham-A} implies that, for $N\ge 1$,
\begin{align*}
\widetilde S^\delta_{N+\varepsilon} f
& = \sum_{j=0}^{m\wedge N} p_j(\varepsilon) \frac{A_{N-j}^\delta}{(N+\varepsilon)^\delta} S_{N-j}^\delta f
+ \frac{1}{(N+\varepsilon)^\delta}\sum_{k=0}^N O\big((N-k+1)^{-2}\big) \text{proj}_k f\\
& = O(1)\max_{0\le j\le m\wedge N}\big|S_{N-j}^\delta f\big|
+ {O(1)} \max_{k\le N} \frac{|\text{proj}_k f|}{(k+1)^\delta}.
\end{align*}
From this we obtain the following converse to Lemma \ref{lem:riesz-cesaro}.

\begin{lemma}\label{lem:cesaro-riesz}
For any $\delta>0$, there exists a constant $C>0$, such that for any $f\in L^1(\mathbb S^n)$, 
$$\sup_{R>0} \big|\widetilde S_R^{\delta} f(x)\big|
\le C \sup_{N\ge 0} \big|S_N^{\delta} f(x)\big|
+ C \sup_{k\ge 0}\frac{|\emph{proj}_k f(x)|}{(k+1)^{\delta}},\ \forall x\in\mathbb S^n.$$
\end{lemma}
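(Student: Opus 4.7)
The plan is to formalize the computation sketched immediately before the lemma, which essentially already gives the proof --- my task is to fill in the indexing and the uniformity. Given $R>0$, I write $R=N+\varepsilon$ with $N$ a nonnegative integer and $\varepsilon\in(0,1]$. The case $N=0$ is trivial: $k<R\le 1$ forces $k=0$ in \eqref{eq:riesz-means-def}, so $\widetilde S^\delta_R f=\text{proj}_0 f$, which is dominated by the second supremum on the right-hand side (since $(0+1)^\delta=1$). So I focus on $N\ge 1$.

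For $N\ge 1$, the first step is to reindex \eqref{eq:riesz-means-def-2} via $\ell=N-k$ to get
$$\widetilde S^\delta_R f=\frac{1}{R^\delta}\sum_{\ell=0}^{N}(\ell+\varepsilon)^\delta\,\text{proj}_{N-\ell}f,$$
and then insert the expansion from Lemma \ref{lem:Ingham-A} for each $(\ell+\varepsilon)^\delta$. This produces a main term $R^{-\delta}\sum_{j=0}^{m}p_j(\varepsilon)\sum_{\ell=0}^{N}A^\delta_{\ell-j}\,\text{proj}_{N-\ell}f$ together with a remainder $R^{-\delta}\sum_{\ell=0}^{N}O((\ell+1)^{-2})\,\text{proj}_{N-\ell}f$, with implied constant absolute (in particular, uniform in $\varepsilon\in(0,1]$, which is part of Ingham's statement).

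For the main term, I shift the inner index by $j$ (using $A^\delta_k=0$ for $k<0$) and then reverse the summation order; by the defining identity \eqref{eq:cesaro-mean-def}, the inner sum collapses to $A^\delta_{N-j}\,S^\delta_{N-j}f$. The polynomials $p_j$ are bounded on $(0,1]$, and by \eqref{eq:A-delta-k} we have $A^\delta_{N-j}/R^\delta=O(1)$ uniformly for $N\ge 1$ and $0\le j\le m\wedge N$. So the main term is dominated pointwise by $C\max_{0\le j\le m\wedge N}|S^\delta_{N-j}f(x)|\le C\sup_{N\ge 0}|S^\delta_N f(x)|$.

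For the remainder, I estimate pointwise $|\text{proj}_{N-\ell}f(x)|\le (N-\ell+1)^\delta\sup_{k\ge 0}|\text{proj}_k f(x)|/(k+1)^\delta$ and use the elementary inequality $\sum_{\ell=0}^{N}(\ell+1)^{-2}(N-\ell+1)^\delta\le C(N+1)^\delta\le CR^\delta$; this controls the remainder by the second supremum on the right-hand side. I do not foresee any real obstacle; the only subtle point is the uniformity of Ingham's constants in $\varepsilon\in(0,1]$, which is built into the statement of Lemma \ref{lem:Ingham-A}.
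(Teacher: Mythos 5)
Your proposal is correct and follows exactly the paper's route: the paper's proof of Lemma \ref{lem:cesaro-riesz} is precisely the displayed computation preceding it, namely writing $R=N+\varepsilon$, expanding $(\ell+\varepsilon)^\delta$ via Lemma \ref{lem:Ingham-A}, resumming the main term into $A^\delta_{N-j}S^\delta_{N-j}f$, and absorbing the $O((\ell+1)^{-2})$ remainder into the projection maximal term. Your filled-in details (the $N=0$ case, the uniform bound $A^\delta_{N-j}/R^\delta=O(1)$, boundedness of the $p_j$ on $(0,1]$, and the uniformity in $\varepsilon$ built into Ingham's statement) are all accurate.
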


For later application, we record that, when $\delta=\delta_0+1$, it follows from Lemmas \ref{lem:antipodal}, \ref{lem:major-a}, and \ref{lem:K^{delta+1}} that for any $f\in L^1(\mathbb S^n)$, 
$$\sup_{N\ge 0} \big|S_N^{\delta_0+1} f(x)\big|<\infty,\ \text{a.e. }x\in \mathbb S^n.$$
Combining this with Lemma \ref{lem:cesaro-riesz}, Lemma \ref{lem:maximal-projection}, and \eqref{eq:shifted-identity}, we see that for any $f\in L^1(\mathbb S^n)$ and $c\in\mathbb R$,
\begin{equation}\label{eq:delta0+1}
\sup_{R>0} \big|\widetilde S_R^{\delta_0+1,c} f(x)\big|<\infty,\ \text{a.e. }x\in \mathbb S^n.
\end{equation}

Now let $c\ge0$. Consider the shifted, quadratic, Riesz $(R,\delta)$ means \begin{equation}\label{eq:quadratic-riesz-means-def}
B^{\delta,c}_R f 
= \sum_{k+c<R} \left (1-\frac{(k+c)^2}{R^2}\right )^\delta \text{proj}_k f,\ R>0.
\end{equation}
Notice that for any $t\in [0,1)$, 
\begin{align*}
(1-t^2)^\delta
&=(1-t)^\delta(1+t)^\delta \\
&=(1-t)^\delta\big(2-(1-t)\big)^\delta \\
&=2^\delta (1-t)^\delta + 2^\delta \sum_{\ell=1}^\infty \binom{\delta}{\ell}\frac{(-1)^\ell}{2^\ell} (1-t)^{\delta+\ell}.
\end{align*}
This allows us to write
\begin{align}
B^{\delta,c}_R f 
&= 2^\delta \widetilde S_R^{\delta,c} f + 2^\delta \sum_{\ell=1}^\infty  \binom{\delta}{\ell}\frac{(-1)^\ell}{2^\ell} \widetilde S_R^{\delta+\ell,c} f.\label{eq:BR-Riesz}
\end{align}
We will bound the series on the right-hand side by its first term. For this we will use the following lemma (see also \cite[p.~279]{SteinWeiss1971} for a related result).

\begin{lemma}\label{lem:major-b}
Let $\psi\in L^1[0,1]$ with $\psi\ge 0$, and let
\begin{equation}\label{eq:phi-psi}
\varphi(t)=\int_0^t \psi(t-s)s^{\delta}ds,\ 0\le t\le 1.
\end{equation}
Let $\{a_k\}_{k=0}^{\infty}$ be a sequence of numbers. For $R>0$, let
\begin{align*}
&\widetilde S^{\delta,c}_R 
= \sum_{k+c<R} \left (1-\frac{k+c}{R}\right )^\delta a_k,\\
&\widetilde S^{\varphi,c}_R 
= \sum_{k+c<R} \varphi\left (1-\frac{k+c}{R}\right ) a_k.
\end{align*}
Then for any $R>0$, 
$$\big|\widetilde S^{\varphi,c}_R \big| 
\le \varphi(1) \sup_{r\le R} \big|\widetilde S^{\delta,c}_r \big|.$$
\end{lemma}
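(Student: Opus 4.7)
The plan is to derive an identity that expresses $\widetilde S^{\varphi,c}_R$ as a weighted integral of $\widetilde S^{\delta,c}_r$ over $r\in[0,R]$, and then use the positivity of $\psi$ to pull out the supremum.

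First I would rewrite $\varphi(1-(k+c)/R)$ as a dilation integral in the variable $r\in[k+c, R]$. Starting from the definition and substituting $u=t-s$ in $\varphi(t)=\int_0^t\psi(t-s)s^\delta\,ds$ gives $\varphi(t)=\int_0^t \psi(u)(t-u)^\delta\,du$. Applying this with $t=1-(k+c)/R$ and then the change of variable $u=1-r/R$ (i.e., $r=R(1-u)$) produces
\[
\varphi\!\left(1-\frac{k+c}{R}\right)=\frac{1}{R^{\delta+1}}\int_{k+c}^{R}\psi\!\left(1-\frac{r}{R}\right)(r-k-c)^\delta\,dr.
\]
This is the crucial computation; the rest is essentially bookkeeping.

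Next I would multiply by $a_k$, sum over $k+c<R$, and interchange sum and integral (the integrand is nonnegative when one replaces $a_k$ by $|a_k|$, which justifies the swap; alternatively, this is a finite sum for each fixed $R$). Using the factorization $(r-k-c)^\delta = r^\delta(1-(k+c)/r)^\delta$, the inner sum over those $k$ with $k+c<r$ collapses to $r^\delta\,\widetilde S^{\delta,c}_r$, yielding the key identity
\[
\widetilde S^{\varphi,c}_R=\frac{1}{R^{\delta+1}}\int_0^R \psi\!\left(1-\frac{r}{R}\right)r^\delta\,\widetilde S^{\delta,c}_r\,dr.
\]

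Finally, since $\psi\ge 0$, taking absolute values and pulling out $\sup_{r\le R}|\widetilde S^{\delta,c}_r|$ gives
\[
|\widetilde S^{\varphi,c}_R|\le \sup_{r\le R}|\widetilde S^{\delta,c}_r|\cdot \frac{1}{R^{\delta+1}}\int_0^R \psi\!\left(1-\frac{r}{R}\right)r^\delta\,dr.
\]
The remaining integral, after the substitution $s=r/R$, equals $\int_0^1 \psi(1-s)s^\delta\,ds=\varphi(1)$, which completes the proof. I expect no serious obstacle here beyond carrying out the substitutions carefully; the only point requiring a small remark is the interchange of summation and integration, which is immediate because for each $R$ the sum defining $\widetilde S^{\varphi,c}_R$ is finite.
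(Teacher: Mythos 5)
Your proof is correct and follows essentially the same route as the paper: the identity $\widetilde S^{\varphi,c}_R=\frac{1}{R^{\delta+1}}\int_0^R \psi\bigl(1-\frac{r}{R}\bigr)r^\delta\,\widetilde S^{\delta,c}_r\,dr$ is exactly the paper's formula $\widetilde S^{\varphi,c}_R=\int_0^1\psi(1-u)u^\delta\,\widetilde S^{\delta,c}_{uR}\,du$ after the substitution $r=uR$, and the concluding step (positivity of $\psi$, pull out the supremum, evaluate the remaining integral as $\varphi(1)$) is identical. The only cosmetic difference is that you enforce the truncation $k+c<r$ via the per-$k$ integration range and an explicit (finite-sum) interchange, whereas the paper encodes it with the positive part $(u-t)_+^\delta$.
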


\begin{proof}
Notice that, for any $t\in [0,1]$,
\begin{align*}
\varphi(1-t)
&=\int_0^{1-t} \psi(1-t-s)s^{\delta}ds\\
&=\int_0^{1} \psi(1-u)(u-t)_+^{\delta}du\\
&=\int_0^{1} \psi(1-u)u^\delta \left (1-\frac{t}{u}\right )_+^{\delta}du.
\end{align*}
Substituting $t=\frac{k+c}{R}$, we find that
\begin{align*}
\widetilde S^{\varphi,c}_R 
&= \int_0^{1} \psi(1-u)u^\delta \widetilde S^{\delta,c}_{uR} du.
\end{align*}
Now bounding 
$$\big|\widetilde S^{\delta,c}_{uR}\big|\le \sup_{r\le R} \big|\widetilde S^{\delta,c}_r \big|,\ 0<u\le 1$$
and noting that
$$\int_0^{1} \psi(1-u)u^\delta du=\varphi(1),$$
the desired bound follows.
\end{proof}

Note that, for any $\rho>0$, $\varphi(t)=t^{\delta+\rho}$ satisfies \eqref{eq:phi-psi} with 
$$\psi(t)=\frac{\Gamma(\delta+\rho+1)}{\Gamma(\delta+1)\Gamma(\rho)} t^{\rho-1}.$$
Thus, by Lemma \ref{lem:major-b}, for any $\rho>0$ we have
$$\big|\widetilde S^{\delta+\rho,c}_R\big|
\le \sup_{r\le R}\big|\widetilde S^{\delta,c}_r\big|,\ R>0.$$
Applying this with $\rho=1,2,\cdots$, we see from \eqref{eq:BR-Riesz} that
\begin{equation}\label{eq:BR-Riesz-2}
B^{\delta,c}_R f 
= 2^\delta \widetilde S_R^{\delta,c} f + O(1)\sup_{r\le R}\big|\widetilde S^{\delta+1,c}_r f\big|,\ R>0.
\end{equation}
Letting $\delta=\delta_0$, and combining \eqref{eq:delta0+1}, \eqref{eq:BR-Riesz-2}, and Corollary \ref{thm:riesz-divergence-shifted}, we obtain the following.

\begin{corollary}\label{thm:BR-divergence-shifted}
For any $c\ge 0$, there exists a function $f\in L^1(\mathbb S^n)$ such that
$$\limsup_{R\rightarrow\infty}\big|B^{\frac{n-1}{2},c}_R f(x)\big|=\infty,\ \text{a.e. } x\in\mathbb S^n.$$
\end{corollary}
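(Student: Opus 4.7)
The plan is to apply the identity \eqref{eq:BR-Riesz-2} at the critical index $\delta=\delta_0$ and exploit the fact that the two terms on its right-hand side behave oppositely: the leading shifted-Riesz term is a.e.\ unbounded, while the remainder is a.e.\ bounded.

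More precisely, given $c\ge 0$, I would begin by invoking Corollary \ref{thm:riesz-divergence-shifted} to select an $f\in L^1(\mathbb S^n)$ for which
\[
\limsup_{R\to\infty}\bigl|\widetilde S_R^{\delta_0,c} f(x)\bigr|=\infty,\quad \text{a.e.\ } x\in\mathbb S^n.
\]
Specializing \eqref{eq:BR-Riesz-2} to $\delta=\delta_0$ for this $f$, one has
\[
B^{\delta_0,c}_R f(x)=2^{\delta_0}\widetilde S_R^{\delta_0,c} f(x)+O(1)\sup_{r\le R}\bigl|\widetilde S_r^{\delta_0+1,c} f(x)\bigr|,\quad R>0,
\]
so the a.e.\ divergence of $B^{\delta_0,c}_R f$ reduces to controlling the error term on the right uniformly in $R$.

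The key observation is that \eqref{eq:delta0+1}, applied to the same $f$, yields $\sup_{r>0}|\widetilde S_r^{\delta_0+1,c} f(x)|<\infty$ for a.e.\ $x\in\mathbb S^n$; in particular, the error term in \eqref{eq:BR-Riesz-2} is a.e.\ bounded uniformly in $R$. Combining this with the unbounded oscillation of $\widetilde S_R^{\delta_0,c} f(x)$ via the reverse triangle inequality then gives
\[
\limsup_{R\to\infty}\bigl|B^{\delta_0,c}_R f(x)\bigr|=\infty,\quad \text{a.e.\ }x\in\mathbb S^n,
\]
which is the desired conclusion.

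Since the decomposition \eqref{eq:BR-Riesz-2}, the regularizing bound \eqref{eq:delta0+1}, and the shifted-Riesz divergence of Corollary \ref{thm:riesz-divergence-shifted} have all been established in the preceding passages, this really amounts to a short synthesis, and I do not anticipate any genuine obstacle. The only conceptual point worth emphasizing is that the argument crucially uses the one-step regularity jump: $\delta_0+1$ lies strictly above the critical index, where a.e.\ finiteness of the maximal shifted-Riesz operator is available (because one can pass via Ces\`aro means of order $\delta_0+1$, which are dominated by the Hardy--Littlewood maximal function and the antipodal kernel of Lemma \ref{lem:antipodal}), whereas the divergence at level $\delta_0$ is precisely what Corollary \ref{thm:riesz-divergence-shifted} supplies.
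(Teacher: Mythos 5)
Your argument is correct and is essentially the paper's own proof: take the $f$ from Corollary \ref{thm:riesz-divergence-shifted}, apply \eqref{eq:BR-Riesz-2} with $\delta=\delta_0$, and absorb the error term via the a.e.\ finiteness \eqref{eq:delta0+1} at the supercritical index $\delta_0+1$. Nothing is missing; this is exactly the synthesis the paper performs.
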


Finally, recall that the Bochner-Riesz means of order $\delta$ are defined by
\begin{align*}
B^{\delta}_R f 
= \sum_{k(k+n-1)<R^2} \left (1-\frac{k(k+n-1)}{R^2}\right )^\delta \text{proj}_k f,\ R>0, 
\end{align*}
where $-k(k+n-1)$ is the eigenvalue in \eqref{eq:eigenvalue}. Notice that, writing $c=\frac{n-1}{2}$, 
\begin{align*}
B^{\delta}_R f 
&= \sum_{k(k+n-1)<R^2} \left (1-\frac{(k+c)^2-c^2}{R^2}\right )^\delta \text{proj}_k f\\
&= \left (1+\frac{c^2}{R^2}\right )^\delta 
\sum_{(k+c)^2<R^2+c^2} \left (1-\frac{(k+c)^2}{R^2+c^2}\right )^\delta  \text{proj}_k f\\
&= \left (1+\frac{c^2}{R^2}\right )^\delta 
B^{\delta,c}_{\sqrt{R^2+c^2}} f.
\end{align*}
Thus, from Corollary \ref{thm:BR-divergence-shifted} we also obtain almost everywhere divergence of the Bochner-Riesz means. 

\begin{corollary}\label{thm:BR-divergence}
There exists a function $f\in L^1(\mathbb S^n)$ such that
$$\limsup_{R\rightarrow\infty}\big|B^{\frac{n-1}{2}}_R f(x)\big|=\infty,\ \text{a.e. } x\in\mathbb S^n.$$
\end{corollary}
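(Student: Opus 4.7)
The plan is to reduce Corollary \ref{thm:BR-divergence} to Corollary \ref{thm:BR-divergence-shifted} via the algebraic identity that comes from completing the square in the eigenvalue. Setting $c=\frac{n-1}{2}$, one has $k(k+n-1)=(k+c)^2-c^2$, so
$$1-\frac{k(k+n-1)}{R^2}=\frac{R^2+c^2-(k+c)^2}{R^2}=\left(1+\frac{c^2}{R^2}\right)\left(1-\frac{(k+c)^2}{R^2+c^2}\right).$$
Also the condition $k(k+n-1)<R^2$ is equivalent to $(k+c)^2<R^2+c^2$. Raising the above factorization to the $\delta$-th power and summing against $\operatorname{proj}_k f$ yields the exact pointwise identity
$$B_R^\delta f=\left(1+\frac{c^2}{R^2}\right)^\delta B^{\delta,c}_{\sqrt{R^2+c^2}} f,\qquad R>0.$$

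With this identity in hand, I would specialize to $\delta=\frac{n-1}{2}$ and apply Corollary \ref{thm:BR-divergence-shifted} with the above value $c=\frac{n-1}{2}\ge 0$ to obtain some $f\in L^1(\mathbb S^n)$ for which $\limsup_{\rho\to\infty}|B^{\frac{n-1}{2},c}_\rho f(x)|=\infty$ almost everywhere. Since the map $R\mapsto \sqrt{R^2+c^2}$ is an increasing bijection of $(0,\infty)$ onto $(c,\infty)$ tending to $\infty$ as $R\to\infty$, the limsup along $\rho=\sqrt{R^2+c^2}$ coincides with the limsup along $\rho\to\infty$, hence is infinite a.e.

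Finally, since the scalar prefactor $(1+c^2/R^2)^\delta\to 1$ as $R\to\infty$ (and is in particular bounded below by, say, $1/2$ for all large $R$), the factorization above implies $\limsup_{R\to\infty}|B^{\frac{n-1}{2}}_R f(x)|=\infty$ a.e. as well, yielding Corollary \ref{thm:BR-divergence}. There is essentially no obstacle here: the entire content of this corollary is the change-of-variables from the quadratic normalization used in the shifted means to the Laplacian-eigenvalue normalization used in the Bochner-Riesz means, so the argument is a short algebraic manipulation combined with the already-established Corollary \ref{thm:BR-divergence-shifted}.
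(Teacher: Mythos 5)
Your proposal is correct and follows essentially the same route as the paper: completing the square $k(k+n-1)=(k+c)^2-c^2$ with $c=\frac{n-1}{2}$ to get the exact identity $B^{\delta}_R f=\bigl(1+\frac{c^2}{R^2}\bigr)^\delta B^{\delta,c}_{\sqrt{R^2+c^2}} f$, and then invoking Corollary \ref{thm:BR-divergence-shifted}. Your additional remarks on the monotone reparametrization $R\mapsto\sqrt{R^2+c^2}$ and the prefactor tending to $1$ just make explicit what the paper leaves implicit.
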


\noindent\textit{Acknowledgment.} We would like to thank Stephen Wainger and Sergey Denisov for the suggestion of using a deterministic construction. We also thank Andreas Seeger and Jongchon Kim for enlightening discussions on related topics. 

\bibliographystyle{abbrv}
\bibliography{bibliography}
\nocite{}

\end{document}